\newtheorem{Th}{Theorem}[section]
\newtheorem{Lem}[Th]{Lemma}
\newtheorem{Rem}[Th]{Remark}
    \newcommand{\Rmnum}[1]{\expandafter\@slowromancap\romannumeral #1@}
\newcommand{\cC}{{\mathcal C}}
\newcommand{\cF}{{\mathcal F}}
\newcommand{\cH}{{\mathcal H}}
\newcommand{\cJ}{{\mathcal J}}
\newcommand{\cN}{{\mathcal N}}
\newcommand{\cT}{{\mathcal T}}
\newcommand{\R}{\mathbb{R}}
\newcommand{\Z}{\mathbb{Z}}
\newcommand{\weakto}{\rightharpoonup}
\numberwithin{equation}{section}
\DeclareMathOperator{\essinf}{\mathrm{ess}\,\mathrm{inf}}
\begin{document}

\title{Systems of coupled Schr\"odinger equations with sign-changing nonlinearities via classical Nehari manifold approach}

\author{Bartosz Bieganowski\thanks{Email address: \texttt{bartoszb@mat.umk.pl}}}
\affil{Nicolaus Copernicus University, Faculty of Mathematics and Computer Science, ul. Chopina 12/18, 87-100 Toru\'n, Poland}

\maketitle

\begin{abstract} 
We propose existence and multiplicity results for the system of Schr\"{o}dinger equations with sign-changing nonlinearities in bounded domains or in the whole space $\R^N$. In the bounded domain we utilize the classical approach via the Nehari manifold, which is (under our assumptions) a differentiable manifold of class $\cC^1$ and the Fountain theorem by Bartsch. In the space $\R^N$ we additionally need to assume the $\mathbb{Z}^N$-periodicity of potentials and our proofs are based on the concentration-compactness lemma by Lions and the Lusternik-Schnirelmann values. 
\medskip

\noindent \textbf{Keywords:} ground state, variational methods, system of Schr\"odinger equations, Nehari manifold, periodic potential
   
\noindent \textbf{AMS Subject Classification:}  Primary: 35Q60; Secondary: 35J20, 35Q55, 58E05, 35J47
\end{abstract}

\section{Introduction}
\setcounter{section}{1}

We consider the following system of coupled Schr\"odinger equations
\begin{equation}\label{eq}
\left\{ \begin{array}{ll}
-\Delta u + V_1 (x) u = f_1 (u) - |u|^{q-2}u + \lambda(x)v & \quad \mbox{in} \ \Omega, \\
-\Delta v + V_2 (x) v = f_2 (v) - |v|^{q-2}v + \lambda(x)u & \quad \mbox{in} \ \Omega, \\
u = v = 0 & \quad \mbox{on} \ \partial \Omega,
\end{array} \right.
\end{equation}
where $\Omega \subset \R^N$ is a bounded domain or $\Omega = \R^N$. Solutions of \eqref{eq} describe standing waves of the following nonlinear time-dependent system
$$
\left\{ \begin{array}{ll}
i \frac{\partial \Psi}{\partial t} = -\Delta \Psi + V_1 (x) \Psi - f_1 (\Psi) + |\Psi|^{q-2}\Psi + \lambda(x)\Phi & \quad (t,x) \in \R \times \Omega, \\
i \frac{\partial \Phi}{\partial t} = -\Delta \Phi + V_2 (x) \Phi - f_2 (\Phi) + |\Phi|^{q-2}\Phi + \lambda(x)\Psi & \quad (t,x) \in \R \times \Omega.
\end{array} \right.
$$
The studying of the existence of standing waves for nonlinear
Schr\"{o}dinger equations arises in various branches of mathematical physics and nonlinear topics (see eg. \cite{Doerfler, GoodmanWinsteinJNS2001, Kuchment, Malomed, Pankov, NonlinearPhotonicCrystals} and references therein). Recently many papers have been devoted to the study of standing waves of the Schr\"odinger equation and of the system of Schr\"odinger equations (see eg. \cite{BieganowskiMederski, AmbrosettiColorado, BartschDingPeriodic, BenciGrisantiMeicheletti, BuffoniJeanStuart, ChabrowskiSzulkin2002, ChenZouCalPDE2013, CotiZelati, GuoMederski, IkomaTanaka, KryszSzulkin, LiSzulkin, Liu, MaiaJDE2006, MederskiTMNA2014, MederskiNLS2014, Rabinowitz:1992, WillemZou} and references therein).

Recently, J. Peng, S. Chen and X. Tang (\cite{PengChenTang}) studied semiclassical states of a similar system
$$
\left\{ \begin{array}{ll}
-\varepsilon^2 \Delta u + a (x) u = |u|^{p-2}u + \mu(x)v & \quad \mbox{in} \ \R^N, \\
-\varepsilon^2 \Delta v + b (x) v = |v|^{p-2}v + \mu(x)u & \quad \mbox{in} \ \R^N, \\
u,v \in H^1 (\R^N), &
\end{array} \right.
$$
where $a,b,\mu \in \cC(\R^N)$ and $\varepsilon > 0$ is sufficiently small. J. M. do \'O and J. C. de Albuquerque considered a similar system to \eqref{eq} in $\R$:
$$
\left\{ \begin{array}{ll}
(-\Delta)^{1/2} u + V_1 (x) u = f_1 (u) + \lambda(x)v & \quad \mbox{in} \ \R, \\
(-\Delta)^{1/2} v + V_2 (x) v = f_2 (v) + \lambda(x)u & \quad \mbox{in} \ \R,
\end{array} \right.
$$ 
but with the square root of the Laplacian $(-\Delta)^{1/2}$ and $f_i$ with exponential critical growth (see \cite{doO}). Similar systems were also considered in \cite{AmbrosettiCeramiRuiz, LiTang, Zhang}, see also references therein.

Our aim is to provide existence and multiplicity results using classical techniques in the presence of external, positive potentials and sign-changing nonlinearities. We show that under classical assumption (V2) on $\lambda$ and in the presence of nonlinearities like $g(x,u) = |u|^{p-2} u - |u|^{q-2}u$, where $2 < q < p < 2^*$ classical techniques can be applied. We assume that
\begin{enumerate}
\item[(F1)] for $i \in \{1,2\}$, $f_i \in \cC^1 (\R)$ is such that
$$
|f_i' (u)| \leq c (1 + |u|^{p-2}) \quad \mbox{for all} \ u \in \R,
$$
where $2 < q < p < 2^* := \frac{2N}{N-2}$; in particular the inequality
$$
|f_i (u)| \leq c (1 + |u|^{p-1}) \quad \mbox{for all} \ u \in \R,
$$
also holds for some $c > 0$;
\item[(F2)] $f_i(u) = o(u)$ as $u \to 0$;
\item[(F3)] $\frac{F_i (u)}{|u|^q} \to \infty$ as $|u| \to \infty$;
\item[(F4)] $\frac{f_i (u)}{|u|^{q-1}}$ is increasing on $(-\infty, 0)$ and on $(0,\infty)$;
\item[(F5)] $f_i(-u) = - f_i(u)$ for all $u \in \R$.
\end{enumerate}

Observe that in view of (F4) we can easily show that
\begin{equation}\label{AR}
0 \leq q F_i (u) \leq f_i (u)u
\end{equation}
for any $u \in \R$.

We impose the following conditions on potentials
\begin{enumerate}
\item[(V1)] for $i\in \{1,2\}$, $\essinf_{x \in \Omega} V_i (x) > 0$ and $V_i \in L^\infty (\Omega)$;
\item[(V2)] $\lambda (x) \geq 0$ is measurable and satisfies
$$
\lambda(x) \leq \delta \sqrt{V_1(x) V_2(x)}
$$
for some $0 < \delta < 1$.
\end{enumerate}

For $\Omega = \R^N$ we assume additionaly that
\begin{enumerate}
\item[(V3)] $V_1, V_2, \lambda \ \mbox{are } \mathbb{Z}^N\mbox{-periodic.}$
\end{enumerate}

Observe that functions $u \mapsto f_1 (u) - |u|^{q-2}u$, $u \mapsto f_2 (u) - |u|^{q-2}u$ don't need to satisfy the Ambrosetti-Rabinowitz-type condition on the whole real line $\R$, e.g. take $f_1(u) = f_2(u) = |u|^{p-2}u$. However such a condition is satisfied for sufficiently large $u$, see Lemma \ref{lemR}.

We provide the following results in the case of bounded $\Omega$.

\begin{Th}\label{Th:Main1}
Assume that (F1)-(F5) and (V1)-(V2) hold, and $\Omega \subset \R^N$ is a bounded domain. Then there exists a ground state solution $(u_0,v_0)$ of \eqref{eq}, i.e. a critical point $(u_0,v_0)$ of the energy functional $\cJ$ being minimizer on the Nehari manifold
$$
\cN = \{ (u,v) \in H^1_0(\Omega) \times H^1_0 (\Omega) \setminus \{(0,0)\} \ : \ \cJ'(u,v) (u,v) = 0 \},
$$
where $\cJ$ is given by \eqref{J}. Moreover $u_0, v_0 \geq 0$.
\end{Th}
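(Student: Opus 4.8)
The plan is to realize the ground state as a minimizer of the energy functional over the Nehari manifold. Let $H:=H^1_0(\Omega)\times H^1_0(\Omega)$ and write the functional $\cJ$ of \eqref{J} as
\[
\cJ(u,v)=\tfrac12 a(u,v)-\int_\Omega F_1(u)-\int_\Omega F_2(v)+\tfrac1q\int_\Omega|u|^q+\tfrac1q\int_\Omega|v|^q ,
\]
where $a(u,v):=\int_\Omega\big(|\nabla u|^2+V_1u^2+|\nabla v|^2+V_2v^2\big)-2\int_\Omega\lambda uv$. By (V1)--(V2) and $2\sqrt{V_1V_2}\,|u||v|\le V_1u^2+V_2v^2$ one gets $(1-\delta)\|(u,v)\|^2\le a(u,v)\le(1+\delta)\|(u,v)\|^2$ with $\|(u,v)\|^2:=\int_\Omega(|\nabla u|^2+V_1u^2+|\nabla v|^2+V_2v^2)$, so $a$ is an equivalent Hilbert norm squared on $H$, and $\cJ\in\cC^2(H)$ by (F1). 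Using (F1)--(F2) and the Sobolev embedding I would first record two standard facts: there are $\rho,\alpha>0$ with $\cJ\ge\alpha$ on $\{\|(u,v)\|=\rho\}$, and there is $r_0>0$ with $\|(u,v)\|\ge r_0$ for every $(u,v)\in\cN$ (from $\cJ'(u,v)(u,v)=0$, dropping the nonpositive term $-\int|u|^q-\int|v|^q$ and using $|f_i(s)s|\le\eps s^2+C_\eps|s|^p$).

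The core is a fibering analysis. Fix $w=(u,v)\neq(0,0)$ and put $\phi_w(t):=\cJ(tw)$ for $t>0$: then $\phi_w(0^+)=0$, $\phi_w(t)>0$ for small $t$ (by the sphere estimate), and $\phi_w(t)\to-\infty$ as $t\to\infty$ by (F3). Writing $g_i(s):=f_i(s)-|s|^{q-2}s$, so $f_i(s)=g_i(s)+|s|^{q-2}s$, a direct computation gives
\[
\phi_w'(t)=t\,a(w)-t^{q-1}m_w(t),\qquad m_w(t):=\int_\Omega k_1(t|u|)\,|u|^q+\int_\Omega k_2(t|v|)\,|v|^q ,
\]
where $k_i(s):=g_i(s)/|s|^{q-1}=f_i(s)/|s|^{q-1}-\mathrm{sgn}(s)$ is nondecreasing on $(0,\infty)$ by (F4) and satisfies $k_i(s)\to+\infty$ as $s\to+\infty$ (forced by (F3)). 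Hence $m_w$ is nondecreasing with $m_w(t)\to+\infty$, so $t\mapsto t^{q-2}m_w(t)$ is strictly increasing on $\{m_w>0\}$ and is $\le0$ elsewhere; since $a(w)>0$ this forces a \emph{unique} $\hat t(w)>0$ solving $a(w)=t^{q-2}m_w(t)$, and at that point $m_w(\hat t(w))>0$, whence $\phi_w''(\hat t(w))=(2-q)\hat t(w)^{q-2}m_w(\hat t(w))-\hat t(w)^{q-1}m_w'(\hat t(w))<0$. Thus $\hat t(w)$ is the strict global maximum of $\phi_w$, $\hat t(w)w\in\cN$ and $\cJ(\hat t(w)w)=\max_{t>0}\cJ(tw)$; in particular $\cN\neq\emptyset$. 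Moreover, with $\Psi(w):=\cJ'(w)w$ one has $\Psi(tw)=t\phi_w'(t)$, so $\Psi'(w)w=\phi_w''(1)<0$ on $\cN$; hence $\cN$ is a $\cC^1$ submanifold of $H$ of codimension one, and it is a natural constraint for $\cJ$ (the multiplier in $\cJ'(w)=\mu\Psi'(w)$ vanishes because $0=\cJ'(w)w=\mu\Psi'(w)w$ and $\Psi'(w)w\neq0$), so every minimizer of $\cJ|_\cN$ is a critical point of $\cJ$.

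To minimize, note that from \eqref{AR} and $\cJ'(w)w=0$ one gets, for $w=(u,v)\in\cN$,
\[
\cJ(w)=\cJ(w)-\tfrac1q\cJ'(w)w=\Big(\tfrac12-\tfrac1q\Big)a(w)+\int_\Omega\!\Big(\tfrac1q f_1(u)u-F_1(u)\Big)+\int_\Omega\!\Big(\tfrac1q f_2(v)v-F_2(v)\Big)\ge\Big(\tfrac12-\tfrac1q\Big)(1-\delta)\|w\|^2 ,
\]
so $c:=\inf_\cN\cJ\ge(\tfrac12-\tfrac1q)(1-\delta)r_0^2>0$ and $\cJ$ is coercive on $\cN$. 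Pick $(w_n)=\big((u_n,v_n)\big)\subset\cN$ with $\cJ(w_n)\to c$. Since $\lambda\ge0$ yields $a(|u|,|v|)\le a(u,v)$ and $F_i,|\cdot|^q$ are even (by (F5)), $\cJ(t|u_n|,t|v_n|)\le\cJ(tu_n,tv_n)$ for all $t>0$, so replacing $w_n$ by $\hat t(|u_n|,|v_n|)(|u_n|,|v_n|)\in\cN$ we may assume $u_n,v_n\ge0$ without raising $\cJ(w_n)$. By coercivity $(w_n)$ is bounded, so up to a subsequence $w_n\rightharpoonup w_0=(u_0,v_0)$ in $H$ with $u_0,v_0\ge0$, and — here boundedness of $\Omega$ enters — $u_n\to u_0$, $v_n\to v_0$ in $L^s(\Omega)$ for all $s\in[2,2^*)$ by Rellich--Kondrachov. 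Then $w_0\neq(0,0)$: from $w_n\in\cN$, $(1-\delta)r_0^2\le a(w_n)\le\int_\Omega(f_1(u_n)u_n+f_2(v_n)v_n)\to\int_\Omega(f_1(u_0)u_0+f_2(v_0)v_0)$ by (F1) and the $L^2,L^p$ convergence. Let $\hat t_0:=\hat t(w_0)$, so $(\hat t_0u_0,\hat t_0v_0)\in\cN$; since the quadratic part of $\cJ$ is weakly lower semicontinuous while the remaining terms are weakly continuous along $w_n\rightharpoonup w_0$ (compact embedding), and $\max_{t>0}\cJ(tw_n)=\cJ(w_n)$ for $w_n\in\cN$, we get $c\le\cJ(\hat t_0w_0)\le\liminf_n\cJ(\hat t_0w_n)\le\liminf_n\cJ(w_n)=c$. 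Hence $(\hat t_0u_0,\hat t_0v_0)$ minimizes $\cJ$ on $\cN$, is therefore a critical point of $\cJ$, i.e. a ground state solution of \eqref{eq}, and its components are $\ge0$.

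The delicate point is the fibering analysis: because $u\mapsto f_1(u)-|u|^{q-2}u$ changes sign, the usual Nehari monotonicity (that $g_i(s)/s$ be increasing) fails, and one must instead work with the $q$-homogeneous rescaling $g_i(s)/|s|^{q-1}$ supplied by (F4) together with the automatic positivity of the weight $m_w$ at a Nehari point — this is what gives both the uniqueness of $\hat t(w)$ and the strict inequality $\phi_w''(\hat t(w))<0$ needed for $\cN$ to be a genuine $\cC^1$-manifold. The compactness behind the attainment of $c$ is, by contrast, routine here only because $\Omega$ is bounded; this is exactly the ingredient that has to be replaced by Lions' concentration-compactness argument in the $\R^N$ case.
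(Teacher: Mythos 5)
Your argument is correct and reaches the same conclusion, but it takes a genuinely different route at the compactness stage. The preparatory material coincides with the paper's Section~2: equivalence of the quadratic form with the norm, the lower bound $\|(u,v)\|\geq r_0$ on $\cN$, the unique projection $\hat t(w)w\in\cN$ realizing $\max_{t>0}\cJ(tw)$, the natural-constraint (Lagrange multiplier) argument, coercivity, and $c>0$; your fibering computation via $\phi_w'(t)=t\,a(w)-t^{q-1}m_w(t)$ with the nondecreasing weight $m_w$ is a tidier, equivalent packaging of the paper's direct sign analysis of $\varphi'$, both resting on (F4) and \eqref{AR}. Where you diverge is the attainment of $c$: the paper produces a Palais--Smale sequence on $\cN$ via Ekeland's variational principle, lifts it to a bounded Palais--Smale sequence for the free functional, and invokes the Palais--Smale condition to get strong convergence; you instead take a plain minimizing sequence, extract a weak limit $w_0$, show $w_0\neq(0,0)$ via Rellich--Kondrachov and the lower bound on $\cN$, and close the argument with the chain $c\leq\cJ(\hat t_0 w_0)\leq\liminf\cJ(\hat t_0 w_n)\leq\liminf\cJ(w_n)=c$, using weak lower semicontinuity of the quadratic part and weak continuity of the remaining terms. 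This buys you a more elementary, self-contained proof (no Ekeland, no verification of (PS)), at the price of not producing strong convergence of the minimizing sequence itself. A further point in your favour: your treatment of nonnegativity --- observing $\cJ(t|u|,t|v|)\leq\cJ(tu,tv)$ for all $t>0$ and then reprojecting $(|u_n|,|v_n|)$ onto $\cN$ --- is more careful than the paper's one-line replacement of $(u_n,v_n)$ by $(|u_n|,|v_n|)$, since the latter pair need not lie on $\cN$ (one only gets $\cJ'(|u|,|v|)(|u|,|v|)\leq 0$). The only cosmetic blemishes are the unneeded claim that $\cJ\in\cC^2$ and the unremarked differentiability of $m_w$; both are harmless, as only $\cJ\in\cC^1$ and $\Psi\in\cC^1$ are actually used, and $m_w\in\cC^1$ follows from (F1).
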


\begin{Th}\label{Th:Main2}
Assume that (F1)-(F5) and (V1)-(V2) hold, and $\Omega \subset \R^N$ is a bounded domain. Then there is a sequence of solutions $(u_n, v_n)$ such that
$$
\cJ(u_n, v_n) \to \infty \quad \mbox{as } n \to \infty,
$$
where $\cJ$ is given by \eqref{J}.
\end{Th}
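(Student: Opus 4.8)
The plan is to deduce Theorem~\ref{Th:Main2} from the Fountain Theorem of Bartsch applied to the energy functional $\cJ$ on the Hilbert space $X := H^1_0(\Omega) \times H^1_0(\Omega)$. First I would note that, by (V2) and Young's inequality, $2\lambda(x)|uv| \le \delta\bigl(V_1(x)u^2 + V_2(x)v^2\bigr)$, so together with (V1) the quadratic part of $\cJ$ is positive definite and defines on $X$ an inner product equivalent to the standard one; I write $\|\cdot\|$ for the corresponding norm. By (F5) the functional $\cJ$ is even, and by (F1)--(F2) it is of class $\cC^1$ on $X$. I fix a Hilbert basis $(e_j)_{j\ge 1}$ of $X$, put $X_j := \R e_j$, $Y_k := \bigoplus_{j=1}^k X_j$, $Z_k := \overline{\bigoplus_{j\ge k} X_j}$, so that $X = \overline{\bigoplus_{j} X_j}$ with $\dim X_j < \infty$, and it remains to verify the Palais--Smale condition together with the two geometric hypotheses of the Fountain Theorem.

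For the Palais--Smale condition at an arbitrary level $c$, abbreviate $g_i(t) := f_i(t) - |t|^{q-2}t$ and $G_i(t) := F_i(t) - \tfrac1q|t|^q$; subtracting $|t|^q$ from \eqref{AR} gives $q\,G_i(t) \le g_i(t)t$. Hence for a $(PS)_c$ sequence $(u_n,v_n)$,
\[
c + o(1)\bigl(1 + \|(u_n,v_n)\|\bigr) \ge \cJ(u_n,v_n) - \tfrac1q \cJ'(u_n,v_n)(u_n,v_n) \ge \Bigl(\tfrac12 - \tfrac1q\Bigr)\|(u_n,v_n)\|^2,
\]
so the sequence is bounded. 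Passing to a subsequence, $(u_n,v_n) \weakto (u,v)$ in $X$, and since $\Omega$ is bounded the embedding $X \hookrightarrow L^r(\Omega)\times L^r(\Omega)$ is compact for $2 \le r < 2^*$; testing $\cJ'(u_n,v_n) - \cJ'(u,v)$ against $(u_n - u, v_n - v)$ and using the subcritical growth of $g_i$ coming from (F1) then yields $\|(u_n,v_n) - (u,v)\| \to 0$.

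For the first geometric condition, set $\beta_k := \sup\{\|u\|_{L^p}^p + \|v\|_{L^p}^p \ : \ (u,v)\in Z_k,\ \|(u,v)\| = 1\}$; a standard weak-compactness argument (using that the $X_j$ span a dense subspace of $X$ and the compact embedding above) shows $\beta_k \to 0$. From (F1)--(F2) one has $G_i(t) \le F_i(t) \le \tfrac\eps2 t^2 + C_\eps |t|^p$, so for $(u,v) \in Z_k$ with $\|(u,v)\| = r$,
\[
\cJ(u,v) \ge \tfrac12 r^2 - \tfrac{\eps}{2}\bigl(\|u\|_{L^2}^2 + \|v\|_{L^2}^2\bigr) - C_\eps\bigl(\|u\|_{L^p}^p + \|v\|_{L^p}^p\bigr) \ge \tfrac14 r^2 - C_\eps \beta_k r^p
\]
once $\eps$ is small enough. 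Optimizing in $r$ produces $r_k$ with $r_k \to \infty$ (here $p > 2$ is used) and $\inf_{(u,v)\in Z_k,\ \|(u,v)\|=r_k}\cJ(u,v) \to \infty$. For the second condition I would use (F3): since $F_i \ge 0$ by \eqref{AR} and $F_i(t)/|t|^q \to \infty$, for every $M > 0$ there is $C_M$ with $G_i(t) \ge (M - \tfrac1q)|t|^q - C_M$ for all $t$; as $Y_k$ is finite-dimensional all norms on it are equivalent, hence $\cJ(u,v) \le \tfrac12\|(u,v)\|^2 - c_k\|(u,v)\|^q + C_M'$ on $Y_k$, and because $q > 2$ this is $\le 0$ on the sphere $\|(u,v)\| = \rho_k$ provided $\rho_k$ is taken large enough (in particular $> r_k$).

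With all hypotheses verified, the Fountain Theorem produces an unbounded sequence of critical values of $\cJ$, hence a sequence of solutions $(u_n,v_n)$ of \eqref{eq} with $\cJ(u_n,v_n)\to\infty$. The only delicate point is that the \emph{combined} nonlinearities $g_i$, though sign-changing, still obey the Ambrosetti--Rabinowitz-type inequality $q\,G_i(t) \le g_i(t)t$ needed for the $(PS)$ bound and the $q$-superquadraticity of $G_i$ needed for the second geometric condition; both survive the subtraction of $|t|^{q-2}t$ thanks to \eqref{AR} and (F3) respectively, which is precisely why the classical machinery applies here. Everything else is routine once the correct equivalent norm on $X$ has been fixed.
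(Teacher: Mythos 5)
Your proposal is correct and follows essentially the same route as the paper: the Fountain Theorem applied directly to $\cJ$ with $X_j=\R e_j$, the bound $q\,(F_i(u)-\tfrac1q|u|^q)\le f_i(u)u-|u|^q$ derived from \eqref{AR} for the Palais--Smale estimate, the quantity $\beta_k\to 0$ for condition (B3), and (F3) plus equivalence of norms on $Y_k$ for condition (B2). The only cosmetic differences are that you verify the Palais--Smale condition by hand where the paper cites \cite[Lemma 2.17]{Willem}, and you leave the admissibility of the $\Z_2$-action (checked in the paper via Borsuk--Ulam) implicit.
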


We also obtain the following existence and multiplicity results in the case $\Omega = \R^N$.

\begin{Th}\label{Th:Main3}
Assume that (F1)-(F4) and (V1)-(V3) hold, and $\Omega = \R^N$. Then there exists a ground state solution $(u_0,v_0)$ of \eqref{eq}, i.e. a critical point $(u_0,v_0)$ of the energy functional $\cJ$ being minimizer on the Nehari manifold
$$
\cN = \{ (u,v) \in H^1(\R^N) \times H^1 (\R^N) \setminus \{(0,0)\} \ : \ \cJ'(u,v) (u,v) = 0 \},
$$
where $\cJ$ is given by \eqref{J}. Moreover $u, v \in \cC(\R^N)$ and there are constants $C, \alpha > 0$ such that
$$
|u(x)| + |v(x)| \leq C e^{-\alpha |x|}.
$$
\end{Th}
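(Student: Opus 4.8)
The plan is to run the classical Nehari-manifold argument for the functional $\cJ$ from \eqref{J}, with Lions' concentration--compactness lemma and the $\Z^N$-periodicity (V3) compensating for the loss of Sobolev compactness on $\R^N$. First I would fix the variational setup: by (V2) we have $2\lambda|uv|\le\delta(V_1u^2+V_2v^2)$, so the coupled quadratic form $\|(u,v)\|^2:=\int_{\R^N}\bigl(|\nabla u|^2+V_1u^2+|\nabla v|^2+V_2v^2-2\lambda uv\bigr)$ is equivalent to the $H^1(\R^N)\times H^1(\R^N)$ norm (it lies between $(1-\delta)$ and $(1+\delta)$ times the uncoupled one), hence $\cJ\in\cC^1$ is well defined. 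Next, for each $(u,v)\neq(0,0)$ the fibering map $\psi_{(u,v)}(t):=\cJ(tu,tv)$ has exactly one positive critical point $t_0$, which is its global maximum: substituting $\psi'_{(u,v)}(t_0)=0$ and using that (F4) (together with (F2) and \eqref{AR}) forces $sf_i(s)\ge0$ and $f_i'(s)s\ge(q-1)f_i(s)$ for all $s$, one gets $\psi''_{(u,v)}(t_0)\le -(q-2)\|(u,v)\|^2<0$; the same computation shows $0$ is a regular value of $(u,v)\mapsto\cJ'(u,v)(u,v)$, so $\cN$ is a $\cC^1$-manifold and a natural constraint. Using (F1)--(F2) and Sobolev embeddings gives $\|(u,v)\|\ge\rho>0$ on $\cN$, and from the identity (valid on $\cN$ by $\cJ'(u,v)(u,v)=0$ and \eqref{AR})
\[
\cJ(u,v)=\Bigl(\tfrac12-\tfrac1q\Bigr)\|(u,v)\|^2+\int_{\R^N}\Bigl(\tfrac1q f_1(u)u-F_1(u)\Bigr)+\Bigl(\tfrac1q f_2(v)v-F_2(v)\Bigr)\ge\Bigl(\tfrac12-\tfrac1q\Bigr)\|(u,v)\|^2
\]
I obtain $c:=\inf_{\cN}\cJ\ge(\tfrac12-\tfrac1q)\rho^2>0$ and that $\cJ$ is coercive on $\cN$.

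The second step extracts a Palais--Smale sequence at level $c$ and deals with the lack of compactness. By Ekeland's variational principle on the complete $\cC^1$-manifold $\cN$ (equivalently, by the mountain-pass theorem, $c$ being the mountain-pass level) there is $(u_n,v_n)\in\cN$ with $\cJ(u_n,v_n)\to c$ and $\cJ'(u_n,v_n)\to0$, and coercivity on $\cN$ makes it bounded in $H^1(\R^N)\times H^1(\R^N)$. I then rule out vanishing: if $\sup_{y\in\R^N}\int_{B(y,1)}(u_n^2+v_n^2)\to0$, Lions' lemma yields $(u_n,v_n)\to0$ in $L^s(\R^N)\times L^s(\R^N)$ for all $s\in(2,2^*)$, hence $\int_{\R^N} f_i(u_n)u_n\to0$ and $\int_{\R^N}|u_n|^q\to0$, and then $\cJ'(u_n,v_n)(u_n,v_n)=0$ forces $\|(u_n,v_n)\|\to0$, contradicting $\|(u_n,v_n)\|\ge\rho$. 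So there are $\beta>0$ and $y_n\in\R^N$ with $\int_{B(y_n,1)}(u_n^2+v_n^2)\ge\beta$; rounding $y_n$ to $\Z^N$ (which enlarges the ball only by a fixed amount) and using that (V3) makes $\cJ$, hence $\cJ'$ and $\cN$, invariant under $(u,v)\mapsto(u(\cdot+y_n),v(\cdot+y_n))$, I replace $(u_n,v_n)$ by this translate and keep a bounded Palais--Smale sequence on $\cN$ at level $c$ with $\int_{B(0,1+\sqrt N)}(u_n^2+v_n^2)\ge\beta$. Passing to a subsequence, $(u_n,v_n)\weakto(u_0,v_0)$ in $H^1(\R^N)\times H^1(\R^N)$ with $u_n\to u_0$, $v_n\to v_0$ in $L^2_{\mathrm{loc}}$ and a.e., so $\int_{B(0,1+\sqrt N)}(u_0^2+v_0^2)\ge\beta$ and $(u_0,v_0)\neq(0,0)$.

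The third step identifies $(u_0,v_0)$ as a ground state. Since $f_i$ has subcritical growth by (F1) and the convergence is locally compact, passing to the limit in $\cJ'(u_n,v_n)=0$ gives $\cJ'(u_0,v_0)=0$, so $(u_0,v_0)\in\cN$ is a nontrivial weak solution of \eqref{eq} and $\cJ(u_0,v_0)\ge c$. For the reverse inequality I apply weak lower semicontinuity of $\|\cdot\|^2$ and Fatou's lemma (the integrands $\tfrac1q f_i(s)s-F_i(s)$ are nonnegative by \eqref{AR}) to the displayed identity evaluated along $(u_n,v_n)$:
\[
c=\lim_n\cJ(u_n,v_n)\ge\Bigl(\tfrac12-\tfrac1q\Bigr)\|(u_0,v_0)\|^2+\int_{\R^N}\Bigl(\tfrac1q f_1(u_0)u_0-F_1(u_0)\Bigr)+\Bigl(\tfrac1q f_2(v_0)v_0-F_2(v_0)\Bigr)=\cJ(u_0,v_0),
\]
so $\cJ(u_0,v_0)=c$ and $(u_0,v_0)$ minimizes $\cJ$ on $\cN$.

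Finally, for the qualitative properties: (F1)--(F2) and a bootstrap (Brezis--Kato / elliptic $L^p$ estimates) give $u_0,v_0\in W^{2,s}_{\mathrm{loc}}(\R^N)$ for every $s<\infty$, hence $u_0,v_0\in\cC(\R^N)\cap L^\infty(\R^N)$ with $u_0(x),v_0(x)\to0$ as $|x|\to\infty$. Setting $w:=u_0^2+v_0^2$, from the two equations $-\tfrac12\Delta w\le u_0(-\Delta u_0)+v_0(-\Delta v_0)=-\bigl(V_1u_0^2+V_2v_0^2-2\lambda u_0v_0\bigr)+u_0\bigl(f_1(u_0)-|u_0|^{q-2}u_0\bigr)+v_0\bigl(f_2(v_0)-|v_0|^{q-2}v_0\bigr)$, where by (V1)--(V2) the matrix $\bigl(\begin{smallmatrix}V_1&-\lambda\\-\lambda&V_2\end{smallmatrix}\bigr)$ is uniformly positive definite (since $\delta<1$ and $\essinf V_i>0$), so the first bracket is $\le -c_0w$, while by (F2) and $q>2$ the remaining terms are $\le\varepsilon w$ for $|x|\ge R$ large. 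Thus $-\Delta w+2(c_0-\varepsilon)w\le0$ on $\{|x|\ge R\}$, and comparing with the supersolution $Ce^{-\alpha|x|}$ (with $\alpha^2<2(c_0-\varepsilon)$ and $C$ so large that $Ce^{-\alpha R}\ge\sup w$) via the maximum principle gives $w(x)\le Ce^{-\alpha|x|}$, i.e. $|u_0(x)|+|v_0(x)|\le C'e^{-\alpha|x|/2}$. I expect the main obstacle to be precisely the loss of compactness on $\R^N$: everything hinges on excluding vanishing and then exploiting (V3) to translate a Palais--Smale sequence back near the origin so that the weak limit is nontrivial \emph{and} the level $c$ is attained; the Nehari bookkeeping with the sign-changing term $f_i(u)-|u|^{q-2}u$ is routine but must be handled with some care, since that term is not itself superquadratic.
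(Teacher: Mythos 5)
Your proposal is correct, and the skeleton (Nehari manifold as a $\cC^1$ natural constraint, Ekeland, coercivity, Lions' lemma to exclude vanishing, $\Z^N$-periodicity to restore compactness) coincides with the paper's. You diverge from the paper in two places, both legitimately. First, where the paper iterates the translation argument into a full profile-decomposition lemma (Lemma \ref{decomp}) and then counts profiles via $\cJ(u_n,v_n)\to\sum_k\cJ(w_1^k,w_2^k)\geq\ell c$ to force $\ell=1$, you translate the Palais--Smale sequence \emph{once} so that the weak limit is nontrivial, identify it as a critical point (hence a point of $\cN$), and close the argument with weak lower semicontinuity of the coupled quadratic form together with Fatou applied to $\cJ-\tfrac1q\cJ'(\cdot)(\cdot)$, whose integrands $\tfrac1q f_i(s)s-F_i(s)$ are nonnegative by \eqref{AR}. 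This is shorter and avoids the bookkeeping of the decomposition lemma (which the paper needs anyway for Theorem \ref{Th:Main4}, so little is lost there); just make sure you state the identity with the coupled form $\|(u,v)\|^2-2\int\lambda uv$ --- you do, via your redefinition of the norm, but it differs from the paper's notation --- and note that this form is weakly lower semicontinuous precisely because \eqref{1.2} makes it an equivalent Hilbert norm. Second, for continuity and exponential decay the paper simply cites Pankov's result, whereas you sketch a self-contained Brezis--Kato bootstrap plus a comparison-function/maximum-principle argument for $w=u_0^2+v_0^2$ on an exterior domain; this is standard and correct (the uniform positive definiteness of the matrix $\bigl(\begin{smallmatrix}V_1&-\lambda\\-\lambda&V_2\end{smallmatrix}\bigr)$ indeed follows from (V1)--(V2) with $\delta<1$), though it is more work than the citation. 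Only cosmetic slips remain: ``passing to the limit in $\cJ'(u_n,v_n)=0$'' should read $\cJ'(u_n,v_n)\to0$, and the passage from the constrained to the free Palais--Smale condition deserves the one-line remark that $\xi'(u,v)(u,v)\leq-(q-2)(1-\delta)\rho^2<0$ uniformly on $\cN$ (the paper glosses over this too).
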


\begin{Th}\label{Th:Main4}
Assume that (F1)-(F5) and (V1)-(V3) hold, and $\Omega =\R^N$. Then there are infinitely many pairs $(\pm u, \pm v)$ of solutions which are geometrically distinct.
\end{Th}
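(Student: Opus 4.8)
The plan is to run the Nehari-manifold min-max scheme of Szulkin and Weth together with the Krasnoselskii genus, the additional ingredient being P.-L.~Lions' concentration-compactness principle, which here plays the role of the (globally failing) Palais--Smale condition. First I would fix the variational setting exactly as in the proof of Theorem~\ref{Th:Main3}: on $H := H^1(\R^N) \times H^1(\R^N)$, assumption (V2) with $\delta < 1$ together with the Cauchy--Schwarz/Young inequality shows that
$$
\|(u,v)\|^2 := \int_{\R^N}\!\big(|\nabla u|^2 + V_1 u^2 + |\nabla v|^2 + V_2 v^2\big)\,dx - 2\int_{\R^N}\!\lambda uv\,dx
$$
is an equivalent Hilbert norm on $H$, the energy functional $\cJ$ given by \eqref{J} is of class $\cC^1$, and by (F1)--(F4) the Nehari manifold $\cN$ is a $\cC^1$-manifold and a natural constraint, with a homeomorphism $m : \cS \to \cN$ (here $\cS$ is the unit sphere of $H$) such that $\Psi := \cJ\circ m \in \cC^1(\cS)$, $\inf_{\cN}\cJ = \inf_{\cS}\Psi > 0$, critical points of $\Psi$ correspond to nontrivial solutions of \eqref{eq}, and Palais--Smale sequences correspond. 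By (F5), all of $\cJ$, $\cN$, $\cS$, $\Psi$ are even. Finally, $\mathbb{Z}^N$ acts isometrically on $H$ by $(u,v)\mapsto\big(u(\cdot-k),v(\cdot-k)\big)$, and by (V3) this action leaves $\cJ$, $\cN$, $\cS$ and $\Psi$ invariant; two solutions are \emph{geometrically distinct} exactly when they lie in distinct $\mathbb{Z}^N$-orbits.

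I would then introduce the Lusternik--Schnirelmann values: with $\gamma$ the Krasnoselskii genus and $\mathcal{A}_k := \{A \subset \cS : A \text{ closed, } A = -A,\ \gamma(A) \geq k\}$, set
$$
c_k := \inf_{A \in \mathcal{A}_k}\ \sup_{w \in A}\Psi(w), \qquad k = 1, 2, \dots
$$
Standard properties give $0 < c_1 \leq c_2 \leq \cdots$, and $c_k \to \infty$; the latter uses (F3) and Lemma~\ref{lemR} to control the growth of $\Psi$ on finite-dimensional subspaces, just as in Bartsch's fountain theorem. Now suppose, for contradiction, that \eqref{eq} has only finitely many geometrically distinct solutions. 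Then the critical set $\cK := \{w \in \cS : \Psi'(w) = 0\}$ is a union of finitely many $\mathbb{Z}^N$-orbits; in particular $\Psi(\cK)$ is a finite set, and --- since a nonzero element of $H$ and any nontrivial $\mathbb{Z}^N$-translate of it are uniformly separated in $H$, and there are only finitely many orbits --- the set $\cK$ is \emph{uniformly discrete}, i.e. $\inf\{\|w - w'\| : w, w' \in \cK,\ w \neq w'\} > 0$.

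The core of the argument is a deformation lemma on $\cS$ valid at \emph{every} level $c > 0$ under this finiteness hypothesis, obtained from the concentration-compactness principle. A Palais--Smale sequence $(w_n)$ for $\Psi$ at level $c$ is bounded (by the Ambrosetti--Rabinowitz-type inequality of Lemma~\ref{lemR} together with \eqref{AR}), hence so is $(u_n,v_n) = m(w_n)$ on $\cN$; if it vanished in the sense of Lions, (F1)--(F3) would force $\|(u_n,v_n)\| \to 0$, impossible since $\inf_{\cN}\cJ > 0$. Therefore, up to a translation by some $k_n \in \mathbb{Z}^N$, a subsequence converges weakly to a nonzero critical point of $\cJ$; subtracting this ``bubble'' and iterating, the $\mathbb{Z}^N$-invariance and a Brezis--Lieb-type splitting of $\cJ$ express $c$ as a finite sum of elements of $\cJ(\cK \cup \{0\})$, and uniform discreteness of $\cK$ bounds the number of bubbles, so the procedure terminates and yields exactly the compactness modulo $\mathbb{Z}^N$ needed to build an odd, $\mathbb{Z}^N$-equivariant deformation decreasing $\Psi$ below any non-critical level. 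With this in hand the usual genus argument shows that every $c_k$ is a critical value of $\Psi$, and if $c_k = c_{k+1} = \cdots = c_{k+j}$ for some $j \geq 1$ then $\gamma(\cK \cap \{\Psi = c_k\}) \geq j + 1$, so this level set is infinite, contradicting uniform discreteness together with finiteness of the orbits. Hence $\{c_k\}_{k\geq 1}$ is an infinite set of pairwise distinct critical values of $\Psi$, contradicting the finiteness of $\Psi(\cK)$. Consequently \eqref{eq} admits infinitely many geometrically distinct solutions, which by (F5) occur as pairs $(\pm u, \pm v)$. The main obstacle is precisely this deformation lemma: converting the genuine failure of the global Palais--Smale condition on $\R^N$ into a usable equivariant compactness statement through the concentration-compactness splitting and the uniform discreteness of $\cK$.
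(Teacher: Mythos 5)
Your proposal is correct and follows essentially the same route as the paper: the Szulkin--Weth reduction of $\cJ$ to $\Psi = \cJ\circ m$ on the unit sphere, the contradiction hypothesis forcing uniform discreteness of the critical set, Lions' concentration-compactness to control Palais--Smale sequences modulo $\mathbb{Z}^N$-translations and build the equivariant deformation, and the Krasnoselskii genus / Lusternik--Schnirelmann values $c_k$ yielding infinitely many critical levels. The only cosmetic differences are your min-max definition of $c_k$ (equivalent to the paper's sublevel-set formulation) and the superfluous claim $c_k \to \infty$, which is not needed since strict monotonicity $c_k < c_{k+1}$ already contradicts the finiteness of $\Psi(\cK)$.
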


We recall that solutions $(u_1, v_1), (u_2, v_2) \in H^1 (\R^N) \times H^1(\R^N)$ are geometrically distinct if $\mathcal{O}(u_1, v_1) \cap \mathcal{O}(u_2, v_2) = \emptyset$, where 
$$
\mathcal{O}(u,v) := \{ (u(\cdot - z), v(\cdot -z)) \ : \ z \in \mathbb{Z}^N \}
$$ 
is the orbit of $(u,v) \in H^1 (\R^N) \times H^1(\R^N)$ under the action of $(\mathbb{Z}^N, +)$. Obviously, in view of (V3), if $(u,v)$ is a solution then the whole orbit $\mathcal{O}(u,v)$ consists of solutions.

\begin{Rem}
Very similar results to Theorems \ref{Th:Main3} and \ref{Th:Main4} can be obtained in the same way for the system of fractional equations
$$
\left\{ \begin{array}{ll}
(-\Delta)^{\alpha / 2} u + V_1 (x) u = f_1 (u) - |u|^{q-2}u + \lambda(x)v & \quad \mbox{in} \ \R^N, \\
(-\Delta)^{\alpha / 2} v + V_2 (x) v = f_2 (v) - |v|^{q-2}v + \lambda(x)u & \quad \mbox{in} \ \R^N, \\
(u,v) \in H^{\alpha / 2} (\R^N) \times H^{\alpha / 2} (\R^N). & 
\end{array} \right.
$$
\end{Rem}

Our paper is organized as follows. The second section contains used notations and some preliminary facts about nonlinearities, potentials and properties of the Nehari manifold. Sections \ref{sect:3}, \ref{sect:4}, \ref{sect:5} and \ref{sect:6} contain proofs of main results - Theorems \ref{Th:Main1}, \ref{Th:Main2}, \ref{Th:Main3} and \ref{Th:Main4} respectively.

\section{Notations and preliminary facts}\label{sect:2}

Let
$$
E := H^1_0 (\Omega) \times H^1_0 (\Omega)
$$
and
$$
\| (u,v) \|^2 := \| u \|_1^2 + \|v\|_2^2, \quad (u,v) \in E,
$$
where
$$
\|u\|_i^2 = \int_{\Omega} | \nabla u|^2 \, dx + \int_{\Omega} V_i(x) u^2 \, dx, \quad i \in \{1,2\}.
$$
Recall that in the case $\Omega = \R^N$ we have $H_0^1 (\Omega) = H^1 (\R^N)$.

The energy functional $\cJ : E \rightarrow \R$ is given by
\begin{align}\label{J}
\cJ(u,v) = \frac{1}{2} \left( \| (u,v) \|^2 - 2 \int_{\Omega} \lambda(x) uv \, dx \right) - \int_{\Omega} F_1 (u) + F_2 (v) \, dx + \frac{1}{q} \int_{\Omega} |u|^q + |v|^q \, dx.
\end{align}
It is classical to check that $\cJ \in \cC^1 (E)$ and critical points of $\cJ$ are weak solutions of \eqref{eq}.
Let
$$
\cN := \{ (u,v) \in E \setminus \{(0,0)\} \ : \ \cJ'(u,v) (u,v) = 0 \}.
$$

\begin{Lem}
For $s \neq 0$ there holds
\begin{equation}\label{1.1}
f_i'(s)s^2 - f_i(s)s > (q-2) f_i(s)s, \quad i \in \{1,2\}.
\end{equation}
\end{Lem}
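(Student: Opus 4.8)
The plan is to read \eqref{1.1} off by differentiating the monotone quotient that appears in (F4). Fix $i\in\{1,2\}$ and put
$$
\phi(s):=\frac{f_i(s)}{|s|^{q-1}},\qquad s\neq 0,
$$
so that (F4) asserts exactly that $\phi$ is increasing on $(-\infty,0)$ and on $(0,\infty)$. By (F1) we have $f_i\in\cC^1(\R)$ and $s\mapsto|s|^{1-q}$ is $\cC^\infty$ on $\R\setminus\{0\}$, hence $\phi\in\cC^1(\R\setminus\{0\})$.

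First I would compute $\phi'$. Writing $|s|^{1-q}=(s^2)^{(1-q)/2}$ one gets $\frac{d}{ds}|s|^{1-q}=(1-q)|s|^{-1-q}s$, and therefore
$$
\phi'(s)=f_i'(s)|s|^{1-q}+(1-q)f_i(s)|s|^{-1-q}s=|s|^{-1-q}\bigl(f_i'(s)s^2-(q-1)f_i(s)s\bigr),\qquad s\neq 0 .
$$
Keeping $|s|$ here (instead of splitting into $s>0$ and $s<0$) is convenient: this single identity already covers both signs. One could equally well argue by cases, differentiating $f_i(s)/s^{q-1}$ for $s>0$ and $f_i(s)/(-s)^{q-1}$ for $s<0$ and then multiplying the resulting inequality by $s$ — the sign flip when $s<0$ produces the same final inequality.

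Now (F4) gives $\phi'(s)\ge 0$ on each half-line, and since $|s|^{-1-q}>0$ for $s\neq 0$ we conclude $f_i'(s)s^2-(q-1)f_i(s)s\ge 0$, which is precisely \eqref{1.1} after rewriting $(q-1)f_i(s)s=(q-2)f_i(s)s+f_i(s)s$. The computation itself is entirely routine; the only point that needs attention is the \emph{strict} inequality in \eqref{1.1}, which is where one must use that (F4) is a strict monotonicity assumption, so that $\phi'(s)>0$ and hence $f_i'(s)s^2-(q-1)f_i(s)s>0$ for every $s\neq 0$. I do not expect any other obstacle.
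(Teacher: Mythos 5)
Your proof is correct and follows essentially the same idea as the paper: differentiate the quotient $f_i(s)/|s|^{q-1}$ from (F4) and read off \eqref{1.1} from the sign of the derivative. The one genuine difference is how the half-line $s<0$ is treated. The paper differentiates $f_i(s)/s^{q-1}$ only for $s>0$ and then transfers the inequality to $s<0$ via the oddness assumption (F5), whereas your single computation $\phi'(s)=|s|^{-1-q}\bigl(f_i'(s)s^2-(q-1)f_i(s)s\bigr)$ covers both signs at once and uses only the monotonicity of $\phi$ on each half-line. This is a small but worthwhile improvement: the lemma is invoked in the proof of Theorem \ref{Th:Main3}, which assumes only (F1)--(F4), so a proof not relying on (F5) is actually what is needed there. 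One caveat applies equally to your argument and to the paper's: strict monotonicity of a $\cC^1$ function only yields $\phi'\ge 0$, not $\phi'>0$ pointwise (think of $s\mapsto s^3$ at the origin), so the strict inequality in \eqref{1.1} really requires reading (F4) as the slightly stronger statement that the derivative of the quotient is positive; you flag this point explicitly, which the paper does not.
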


\begin{proof}
Let $\varphi_i(s) := \frac{f_i(s)}{|s|^{q-1}}$ for $s > 0$ and in view of (F4) we have
$$
\frac{d \varphi_i (s)}{ds} > 0.
$$
Hence
$$
f_i'(s) s^{q-1} - (q-1) f_i (s) s^{q-2} > 0
$$
for $s > 0$. So
$$
f_i'(s) s^2 - f_i(s)s - (q-2) f_i(s) s > 0
$$
and the conclusion follows for $s > 0$. Suppose now that $s < 0$. Then $-s > 0$ and
$$
f_i'(-s) (-s)^2 - f_i(-s)(-s) - (q-2) f_i(-s) (-s) > 0
$$
which implies that
$$
f_i'(s) s^2 - f_i(s)s - (q-2) f_i(s)s > 0,
$$
since (F5) holds.
\end{proof}

\begin{Lem}
There holds
\begin{equation}\label{1.2}
\|(u,v)\|^2 - 2 \int_{\Omega} \lambda(x) uv \, dx \geq (1-\delta) \| (u,v) \|^2.
\end{equation}
\end{Lem}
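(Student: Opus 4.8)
The plan is to estimate the cross term $2\int_\Omega \lambda(x)uv\,dx$ from above by $\delta\|(u,v)\|^2$ using the pointwise bound on $\lambda$ from (V2) together with an elementary Young-type inequality, and then subtract.

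First I would observe that since $\lambda(x)\ge 0$ we have $uv\le |u||v|$ pointwise, so $2\int_\Omega \lambda(x)uv\,dx\le 2\int_\Omega \lambda(x)|u||v|\,dx$. Next, applying (V2),
\begin{equation*}
2\lambda(x)|u||v|\le 2\delta\sqrt{V_1(x)V_2(x)}\,|u||v|=2\delta\sqrt{V_1(x)u^2}\sqrt{V_2(x)v^2}\le \delta\bigl(V_1(x)u^2+V_2(x)v^2\bigr),
\end{equation*}
where the last step is the inequality $2\sqrt{ab}\le a+b$ applied with $a=V_1(x)u^2\ge 0$ and $b=V_2(x)v^2\ge 0$ (this is where we use $V_i\ge 0$, guaranteed by (V1)).

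Integrating over $\Omega$ and recalling the definition of $\|\cdot\|_i$, we get
\begin{equation*}
2\int_\Omega \lambda(x)uv\,dx\le \delta\int_\Omega V_1(x)u^2+V_2(x)v^2\,dx\le \delta\left(\|u\|_1^2+\|v\|_2^2\right)=\delta\|(u,v)\|^2,
\end{equation*}
since $\int_\Omega V_i(x)w^2\,dx\le \int_\Omega|\nabla w|^2\,dx+\int_\Omega V_i(x)w^2\,dx=\|w\|_i^2$. Subtracting from $\|(u,v)\|^2$ yields the claim. There is no real obstacle here; the only point requiring a moment's care is that $\|u\|_i^2$ also contains the Dirichlet term $\int_\Omega|\nabla u|^2$, so one must note that dropping this nonnegative term only enlarges the right-hand side, which is exactly what we need for the bound to go through.
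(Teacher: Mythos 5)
Your proof is correct and is essentially the same as the paper's: bound the cross term via (V2) and the AM--GM inequality $2\sqrt{ab}\le a+b$ applied to $a=V_1(x)u^2$, $b=V_2(x)v^2$, then absorb the result into the norm. The paper writes the same estimate as a lower bound on $-2\int_\Omega\lambda(x)uv\,dx$ rather than an upper bound on $2\int_\Omega\lambda(x)uv\,dx$, but the content is identical.
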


\begin{proof}
For any $(u,v)\in E$ we have
$$
-2 \int_{\Omega} \lambda(x) uv \, dx  \geq -2\delta \int_{\Omega} \sqrt{V_1(x) V_2(x)} |uv| \, dx \geq -\delta \left( \int_{\Omega} V_1(x) u^2 \, dx + \int_{\Omega} V_2(x) v^2 \, dx \right).
$$
Hence
$$
\|(u,v)\|^2 - 2 \int_{\Omega} \lambda(x) uv \, dx \geq (1-\delta) \| (u,v) \|^2.
$$
\end{proof}

\begin{Lem}\label{lem:1.3}
$\cN \subset E$ is a $\cC^1$-manifold.
\end{Lem}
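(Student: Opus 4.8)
The plan is to exhibit $\cN$ as a regular level set of the $\cC^1$ map
\[
G : E \to \R, \qquad G(u,v) := \cJ'(u,v)(u,v) = \|(u,v)\|^2 - 2\int_\Omega \lambda(x) uv \, dx - \int_\Omega \big( f_1(u)u + f_2(v)v \big)\, dx + \int_\Omega |u|^q + |v|^q \, dx ,
\]
and then to invoke the standard regular‑value theorem in Banach spaces. Since $f_i\in\cC^1$ has subcritical growth by (F1), the functional $\cJ$ is of class $\cC^2$ on $E$; in particular $G\in\cC^1(E)$ and $\cN = G^{-1}(0)\setminus\{(0,0)\}$. It therefore suffices to verify two facts: that $\cN$ is bounded away from $(0,0)$ (this matters because $(0,0)$ is itself a critical zero of $G$, so it must be excised from $G^{-1}(0)$), and that $G'(u,v)\neq 0$ for every $(u,v)\in\cN$.

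\emph{Separation from the origin.} Combining (F1) and (F2), for every $\eps>0$ there is $C_\eps>0$ with $|f_i(s)s| \le \eps s^2 + C_\eps |s|^p$ for all $s\in\R$. For $(u,v)\in\cN$ the defining identity is $\|(u,v)\|^2 - 2\int_\Omega \lambda uv = \int_\Omega f_1(u)u + f_2(v)v - \int_\Omega |u|^q+|v|^q$; using \eqref{1.2} on the left, dropping the nonnegative term $\int_\Omega |u|^q+|v|^q$ on the right, and applying the Sobolev embeddings $H^1_0(\Omega)\hookrightarrow L^2(\Omega)\cap L^p(\Omega)$ (recall $2<p<2^*$) together with $\|u\|_1^p+\|v\|_2^p\le\|(u,v)\|^p$, one obtains
\[
(1-\delta)\|(u,v)\|^2 \le \eps C\|(u,v)\|^2 + C'\|(u,v)\|^p .
\]
Choosing $\eps$ so small that $\eps C \le (1-\delta)/2$ yields $\|(u,v)\|\ge r_0$ for some $r_0>0$ independent of $(u,v)\in\cN$, so $\cN\subset\{\|(u,v)\|>r_0/2\}$, an open subset of $E$ on which $G$ is $\cC^1$.

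\emph{Non-vanishing of $G'$ on $\cN$.} A direct differentiation gives, for arbitrary $(u,v)$,
\[
G'(u,v)(u,v) = 2\Big( \|(u,v)\|^2 - 2\int_\Omega \lambda uv \Big) - \int_\Omega \big( f_1'(u)u^2 + f_1(u)u \big) - \int_\Omega \big( f_2'(v)v^2 + f_2(v)v \big) + q\int_\Omega |u|^q + |v|^q .
\]
If $(u,v)\in\cN$, substituting the Nehari identity for $\|(u,v)\|^2 - 2\int_\Omega\lambda uv$ and simplifying reduces this to
\[
G'(u,v)(u,v) = \int_\Omega \big( f_1(u)u - f_1'(u)u^2 \big) + \int_\Omega \big( f_2(v)v - f_2'(v)v^2 \big) + (q-2)\int_\Omega |u|^q + |v|^q .
\]
By \eqref{1.1} we have $f_i(s)s - f_i'(s)s^2 \le -(q-2)f_i(s)s$ pointwise (with equality only where $s=0$), hence, using the Nehari identity once more and then \eqref{1.2},
\[
G'(u,v)(u,v) \le -(q-2)\Big( \int_\Omega f_1(u)u + f_2(v)v - \int_\Omega |u|^q+|v|^q \Big) = -(q-2)\Big( \|(u,v)\|^2 - 2\int_\Omega\lambda uv \Big) \le -(q-2)(1-\delta)\|(u,v)\|^2 .
\]
Since $q>2$, $\delta<1$ and $(u,v)\neq(0,0)$, this is strictly negative; in particular $G'(u,v)\neq 0$.

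Combining the two steps, $\cN = G^{-1}(0)\cap\{\|(u,v)\|>r_0/2\}$ with $0$ a regular value of $G$ on the open set $\{\|(u,v)\|>r_0/2\}$, so by the regular‑value (submersion) theorem $\cN$ is a $\cC^1$ submanifold of $E$ of codimension one. The computational heart of the argument — and the step I expect to be the main obstacle — is establishing the reduced formula for $G'(u,v)(u,v)$ on $\cN$ and controlling its sign via the pointwise inequality \eqref{1.1}; the separation of $\cN$ from the origin is a routine consequence of (F1)–(F2) and the Sobolev embedding, and the rest is the standard submanifold argument.
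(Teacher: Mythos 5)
Your proof is correct and follows essentially the same route as the paper: write $\cN$ as the zero set of $\xi(u,v)=\cJ'(u,v)(u,v)$ with the origin removed, and show $\xi'(u,v)(u,v)<0$ on $\cN$ by combining the Nehari identity with \eqref{1.1} and \eqref{1.2}. The only difference is your extra ``separation from the origin'' step, which reproduces Lemma \ref{lem:1.5} and is not actually needed here (it suffices that $E\setminus\{(0,0)\}$ is open and $0$ is a regular value of $\xi$ there), though it does no harm.
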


\begin{proof}
Define
$$
\xi(u,v) := \cJ'(u,v)(u,v) = \|(u,v)\|^2 - 2 \int_{ \Omega} \lambda(x) uv \, dx - \int_{ \Omega } f_1(u)u \, dx - \int_{ \Omega } f_2(v)v \, dx + |u|_q^q + |v|_q^q.
$$
Obviously $\cN = \xi^{-1} (\{ 0 \}) \setminus \{(0,0)\}$. For $(u,v) \in \cN$ one has, using \eqref{1.1} and \eqref{1.2}
\begin{align*}
\xi'(u,v)(u,v) &= 2 \| (u,v) \|^2 - 4 \int_{\Omega} \lambda(x) uv \, dx \\
&\quad - \int_{\Omega} f_1(u)u + f_1 ' (u) u^2 \, dx - \int_{\Omega} f_2(v)v + f_2 ' (v) v^2 \, dx + q |u|_q^q + q |v|_q^q \\
&= - \int_{\Omega} f_1 ' (u)u^2  - f_1(u)u\, dx - \int_{\Omega} f_2 ' (v)v^2 -f_2(v)v \, dx + (q-2) |u|_q^q + (q-2) |v|_q^q \\
&< - (q-2) \int_{\Omega} f_1(u)u \, dx - (q-2) \int_{\Omega} f_2(v)v  \, dx + (q-2) |u|_q^q + (q-2) |v|_q^q \\
&= (q-2) \left( |u|_q^q + |v|_q^q - \int_{\Omega} f_1(u)u + f_2(u)u \, dx \right) < 0.
\end{align*}
Therefore $0$ is a regular value of $\xi$ and $\xi^{-1}(\{0\}) \setminus \{(0,0)\} = \cN$ is a $\cC^1$-manifold.
\end{proof}

\begin{Lem}\label{lem:1.4}
For every $\varepsilon > 0$ there is $C_\varepsilon > 0$ such that
$$
|F_i (s)| + |f_i (s)s| \leq \varepsilon |s|^2 + C_\varepsilon |s|^p,
$$
where $i \in \{1,2\}$.
\end{Lem}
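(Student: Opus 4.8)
The plan is to patch together two elementary estimates: the sublinear control of $f_i$ near the origin furnished by (F2), and the subcritical polynomial growth at infinity furnished by (F1). It is convenient to prove the bound with $\varepsilon$ replaced by $\varepsilon/2$ from the start, and to split $\R$ into a small ball and its complement.

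First I would use (F2) to fix, for a given $\varepsilon>0$, a radius $r\in(0,1]$ such that $|f_i(s)|\le\frac{\varepsilon}{2}|s|$ whenever $|s|\le r$. Integrating this bound over $[0,|s|]$ gives $|F_i(s)|\le\frac{\varepsilon}{4}|s|^2$, so that for $|s|\le r$ one has $|F_i(s)|+|f_i(s)s|\le\frac{3\varepsilon}{4}|s|^2\le\varepsilon|s|^2$, and in this region the claim holds with any choice of $C_\varepsilon$.

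Next, for $|s|\ge r$ I would invoke (F1). Since $1\le r^{-(p-1)}|s|^{p-1}$ on this range, it yields $|f_i(s)|\le c(1+r^{-(p-1)})|s|^{p-1}=:c'|s|^{p-1}$, hence $|f_i(s)s|\le c'|s|^p$. For the primitive I would split $\int_0^{|s|}|f_i(t)|\,dt$ at $r$, using the near-origin bound on $[0,r]$ and the growth bound on $[r,|s|]$, which gives $|F_i(s)|\le\frac{\varepsilon}{4}r^2+\frac{c'}{p}|s|^p$; since $p>2$, $r\le 1$ and $r\le|s|$, the additive constant $\frac{\varepsilon}{4}r^2$ is bounded by $\frac{\varepsilon}{4}r^{2-p}|s|^p$. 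Collecting terms, $|F_i(s)|+|f_i(s)s|\le C_\varepsilon|s|^p$ for $|s|\ge r$, with $C_\varepsilon:=c'+\frac{c'}{p}+\frac{\varepsilon}{4}r^{2-p}$. Combining the two regimes gives $|F_i(s)|+|f_i(s)s|\le\varepsilon|s|^2+C_\varepsilon|s|^p$ for all $s\in\R$ and $i\in\{1,2\}$.

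There is no serious obstacle here; the only point requiring a little care is the absorption of the additive constant produced by integrating $f_i$ through the origin, and this succeeds precisely because on the region where that constant appears $|s|$ is bounded below by $r$, so it is dominated by a multiple of $|s|^p$.
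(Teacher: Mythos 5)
Your argument is correct and is precisely the standard splitting (small $|s|$ via (F2), large $|s|$ via (F1)) that the paper's one-line proof ("follows immediately from (F1), (F2) and (F5)") is alluding to; you simply supply the routine details, including the minor point of absorbing the constant $\frac{\varepsilon}{4}r^2$ into $C_\varepsilon|s|^p$ on the outer region. Note that (F5) is not actually needed for your version, since the pointwise bounds from (F1)--(F2) already hold for all signs of $s$.
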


\begin{proof}
The inequality follows immediately from (F1), (F2) and (F5).
\end{proof}

\begin{Lem}\label{lem:1.5}
There holds
$$
\inf_{(u,v) \in \cN} \| (u,v) \| \geq \rho > 0.
$$
\end{Lem}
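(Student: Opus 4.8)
The plan is to exploit the identity defining $\cN$ together with the coercivity estimate \eqref{1.2} and the subcritical growth bound of Lemma \ref{lem:1.4}. Fix $(u,v)\in\cN$. Then $\cJ'(u,v)(u,v)=0$, which upon rearranging reads
$$\|(u,v)\|^2 - 2\int_{\Omega}\lambda(x)uv\,dx + |u|_q^q + |v|_q^q = \int_{\Omega} f_1(u)u + f_2(v)v\,dx.$$
On the left-hand side I discard the nonnegative terms $|u|_q^q+|v|_q^q$ and apply \eqref{1.2} to bound the remainder below by $(1-\delta)\|(u,v)\|^2$. On the right-hand side I integrate the pointwise estimate of Lemma \ref{lem:1.4} (with a small parameter $\eps>0$ to be fixed later), obtaining the upper bound $\eps(|u|_2^2+|v|_2^2) + C_\eps(|u|_p^p+|v|_p^p)$.

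Next I pass from the $L^2$- and $L^p$-norms to the energy norm $\|\cdot\|$. By (V1) one has $\|u\|_i^2 \ge \min\{1,\essinf_{\Omega}V_i\}\,\|u\|_{H^1_0(\Omega)}^2$, so each $\|\cdot\|_i$ dominates the standard $H^1_0$-norm; since $2<p<2^*$, the Sobolev embedding $H^1_0(\Omega)\hookrightarrow L^r(\Omega)$ for $r\in\{2,p\}$ then gives a constant $C>0$, independent of $(u,v)$, with $|u|_2^2\le C\|u\|_1^2$, $|v|_2^2\le C\|v\|_2^2$, $|u|_p^p\le C\|u\|_1^p$ and $|v|_p^p\le C\|v\|_2^p$. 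Using in addition the elementary inequality $a^{p/2}+b^{p/2}\le (a+b)^{p/2}$ for $a,b\ge 0$ (valid because $p/2\ge 1$), we obtain $|u|_2^2+|v|_2^2\le C\|(u,v)\|^2$ and $|u|_p^p+|v|_p^p\le C\|(u,v)\|^p$. Combining everything,
$$(1-\delta)\|(u,v)\|^2 \le \eps C\|(u,v)\|^2 + C_\eps C\|(u,v)\|^p.$$

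Finally, choose $\eps>0$ so small that $\eps C\le \tfrac12(1-\delta)$, absorb that term into the left-hand side, and divide by $\|(u,v)\|^2$, which is legitimate since $(u,v)\ne(0,0)$. This yields $\tfrac12(1-\delta)\le C_\eps C\,\|(u,v)\|^{p-2}$, that is, $\|(u,v)\| \ge \left(\tfrac{1-\delta}{2C_\eps C}\right)^{1/(p-2)}=:\rho>0$, which is the assertion. There is no genuine difficulty in this argument; the only points deserving care are checking that $\|\cdot\|_i$ controls the ambient $H^1$-norm with a constant independent of $(u,v)$ (so the Sobolev constants are uniform) and observing that it is precisely the strict inequality $p>2$ that makes the absorption step close — for $p=2$ one could not conclude a positive lower bound.
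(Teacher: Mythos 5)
Your argument is correct and uses exactly the same ingredients as the paper (the Nehari identity, the estimate \eqref{1.2}, Lemma \ref{lem:1.4}, and the Sobolev embedding); the only difference is that you run it directly to get an explicit $\rho$, whereas the paper phrases it as a contradiction with a sequence $\|(u_n,v_n)\|\to 0$. No gaps.
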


\begin{proof}
Suppose that $(u_n,v_n) \in \cN$ is such that
$$
\|(u_n,v_n)\| \to 0.
$$
Hence $\|u_n\|_1 \to 0$ and $\|v_n\|_2 \to 0$.
In view of \eqref{1.2}
\begin{align*}
(1 - \delta) \|(u_n,v_n)\|^2 &\leq \|(u_n,v_n)\|^2 - 2 \int_{\Omega} \lambda(x)u_n v_n \, dx = \int_{\Omega} f_1(u_n)u_n + f_2(v_n)v_n \, dx - |u_n|_q^q - |v_n|_q^q \\ &\leq \int_{\Omega} f_1(u_n)u_n + f_2(v_n)v_n \, dx.
\end{align*}
Thus
\begin{align*}
\|(u_n,v_n)\|^2 &\leq \frac{1}{1-\delta} \int_{\Omega} f_1(u_n)u_n + f_2(v_n)v_n \, dx \leq C \left( \varepsilon\|u_n\|_1^2 + C_\varepsilon \|u_n\|_1^p + \varepsilon\|v_n\|_2^2 + C_\varepsilon \|v_n\|_2^p \right) \\ &= C \left( \varepsilon \|(u_n,v_n)\|^2 + C_\varepsilon \|u_n\|_1^p + C_\varepsilon \|v\|_2^p \right).
\end{align*}
Choose $\varepsilon > 0$ such that $1 - \varepsilon C > 0$. Then
\begin{align*}
(1 - \varepsilon C) \leq C_\varepsilon \frac{\|u_n\|_1^p + \|v_n\|_2^p}{\|(u_n, v_n)\|^2} &= C_\varepsilon \left( \frac{\|u_n\|_1^p}{\|u_n\|_1^2 + \|v_n\|_2^2} + \frac{\|v_n\|_2^p}{\|u_n\|_1^2 + \|v_n\|_2^2} \right) \\ &\leq C_\varepsilon \left( \|u_n\|_1^{p-2} + \|v_n\|_2^{p-2} \right) \to 0
\end{align*}
- a contradiction.
\end{proof}

\begin{Lem}\label{lem:2.6}
Suppose that $(u_0,v_0) \in \cN$ is a critical point of $\cJ \Big|_\cN : \cN \rightarrow \R$. Then $\cJ'(u_0,v_0) = 0$. 
\end{Lem}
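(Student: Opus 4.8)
The plan is to use the standard Lagrange multiplier argument. Since $(u_0,v_0)$ is a critical point of the restricted functional $\cJ\big|_\cN$ and $\cN = \xi^{-1}(\{0\})\setminus\{(0,0)\}$ is a $\cC^1$-manifold by Lemma \ref{lem:1.3}, there exists a Lagrange multiplier $\mu \in \R$ such that
$$
\cJ'(u_0,v_0) = \mu\, \xi'(u_0,v_0)
$$
in $E^*$, where $\xi(u,v) := \cJ'(u,v)(u,v)$ is the function defining the Nehari manifold. The goal is to show $\mu = 0$.

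First I would test this identity on the pair $(u_0,v_0)$ itself. Since $(u_0,v_0)\in\cN$, we have $\xi(u_0,v_0)=\cJ'(u_0,v_0)(u_0,v_0)=0$, so the left-hand side contracted with $(u_0,v_0)$ vanishes, and we obtain
$$
0 = \mu\, \xi'(u_0,v_0)(u_0,v_0).
$$
Now the key point is the computation already carried out inside the proof of Lemma \ref{lem:1.3}: for every $(u,v)\in\cN$ one has $\xi'(u,v)(u,v) < 0$; in particular $\xi'(u_0,v_0)(u_0,v_0)\neq 0$. Hence the displayed equation forces $\mu = 0$, and therefore $\cJ'(u_0,v_0) = 0$, i.e.\ $(u_0,v_0)$ is a genuine critical point of $\cJ$ on all of $E$, which is exactly the claim.

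The only point requiring a little care — and the main (minor) obstacle — is justifying the existence of the multiplier, i.e.\ that a critical point of $\cJ\big|_\cN$ satisfies $\cJ'(u_0,v_0)=\mu\,\xi'(u_0,v_0)$. This is precisely where Lemma \ref{lem:1.3} is used: because $0$ is a regular value of $\xi$ away from the origin, the tangent space to $\cN$ at $(u_0,v_0)$ is $\ker \xi'(u_0,v_0)$, so $\cJ'(u_0,v_0)$ annihilates $\ker\xi'(u_0,v_0)$ and must therefore be a scalar multiple of the nonzero functional $\xi'(u_0,v_0)$. Everything else is the short chain of contractions above. I would write the proof in three lines: invoke the multiplier rule, test on $(u_0,v_0)$, and cite the strict inequality $\xi'(u_0,v_0)(u_0,v_0)<0$ from Lemma \ref{lem:1.3} to conclude $\mu=0$.
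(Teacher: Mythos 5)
Your argument is exactly the paper's proof: introduce $\xi(u,v)=\cJ'(u,v)(u,v)$, obtain the Lagrange multiplier $\mu$ from the $\cC^1$-manifold structure of Lemma \ref{lem:1.3}, test the identity on $(u_0,v_0)$, and use the strict inequality $\xi'(u_0,v_0)(u_0,v_0)<0$ established in the proof of Lemma \ref{lem:1.3} to force $\mu=0$. The proposal is correct and coincides with the paper's reasoning.
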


\begin{proof}
Let
$$
\xi(u,v) := \cJ'(u,v)(u,v).
$$
Since $(u_0,v_0) \in \cN$ is a critical point of $\cJ \Big|_\cN$ there exists a Lagrange multiplier $\mu \in \R$ such that
$$
\cJ'(u_0,v_0) - \mu \xi'(u_0,v_0) = 0.
$$
Thus
$$
0 = \cJ'(u_0,v_0)(u_0,v_0) = \mu \xi'(u_0,v_0)(u_0,v_0).
$$
Taking into account that $\xi'(u_0,v_0)(u_0,v_0) < 0$ (see the proof of Lemma \ref{lem:1.3}) we get $\mu = 0$ and 
$$
\cJ'(u_0,v_0) = \mu \xi'(u_0,v_0) = 0.
$$
\end{proof}

\begin{Lem}\label{lem:1.7}
For every $(u,v) \in E \setminus \{(0,0)\}$ there is a unique $t > 0$ such that
$$
(tu,tv) \in \cN
$$
and $\cJ(tu,tv) = \max_{s \geq 0} \cJ(su,sv)$.
\end{Lem}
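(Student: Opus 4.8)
The plan is to study the fibering map $g(t) := \cJ(tu,tv)$ for $t \geq 0$ and show it has exactly one positive critical point, which is a strict global maximum. Fix $(u,v) \in E \setminus \{(0,0)\}$ and write
\[
g(t) = \frac{t^2}{2}\left( \|(u,v)\|^2 - 2\int_{\Omega}\lambda(x)uv\,dx \right) - \int_{\Omega} F_1(tu) + F_2(tv)\,dx + \frac{t^q}{q}\int_{\Omega} |u|^q + |v|^q\,dx.
\]
By \eqref{1.2} the quadratic coefficient is at least $\tfrac{1-\delta}{2}\|(u,v)\|^2 > 0$. Using Lemma \ref{lem:1.4} with a small $\varepsilon$, the term $\int F_1(tu)+F_2(tv)\,dx$ is controlled by $\varepsilon t^2(\dots) + C_\varepsilon t^p(\dots)$, so for $t$ small the $t^2$-part dominates and $g(t) > 0$; since also $g(0)=0$ and $g'(0)=0$, we get $g(t)>0$ for small $t>0$. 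For the behaviour at infinity, fix any point where, say, $u \neq 0$ on a set of positive measure; by (F3), $F_1(tu)/(t^q|u|^q) \to \infty$ pointwise a.e. on $\{u\neq 0\}$, so by Fatou $\int F_1(tu)\,dx \geq t^q \cdot o(1)^{-1}\|u\|_q^q$ grows faster than $t^q$ (and similarly if $v\neq0$), hence the negative term $-\int F_1(tu)+F_2(tv)\,dx$ eventually beats both $\tfrac{t^2}{2}(\dots)$ and $\tfrac{t^q}{q}(\dots)$, giving $g(t)\to-\infty$. Thus $g$ attains a positive maximum at some $t>0$ with $g'(t)=0$.

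Next observe that the equation $g'(t)=0$ for $t>0$ is exactly $(tu,tv)\in\cN$, since $g'(t) = \tfrac1t\,\cJ'(tu,tv)(tu,tv)$. To get uniqueness, I would argue that any critical point of $g$ on $(0,\infty)$ is a strict local maximum, which for a $\cC^1$ function on an interval forces uniqueness. Compute, for $t>0$,
\[
t\,g'(t) = t^2 A - \int_{\Omega} f_1(tu)tu + f_2(tv)tv\,dx + t^q B,
\]
where $A := \|(u,v)\|^2 - 2\int\lambda uv\,dx > 0$ and $B := \int|u|^q+|v|^q\,dx \geq 0$. Differentiating the relation $g'(t)=0$ and using \eqref{1.1} in the form $f_i'(s)s^2 > (q-1)f_i(s)s$, one finds at a critical point $t_0$ that
\[
t_0\,g''(t_0) = -\frac{1}{t_0}\int_{\Omega}\big(f_1'(t_0u)(t_0u)^2 - f_1(t_0u)(t_0u)\big) + \big(f_2'(t_0v)(t_0v)^2 - f_2(t_0v)(t_0v)\big)\,dx + (q-2)t_0^{q-1}B,
\]
after eliminating $t_0^2 A$ via the critical point condition; by \eqref{1.1} the integrand exceeds $(q-2)f_i(t_0u)(t_0u)$ etc., so combining again with the critical point condition $t_0^2A = \int f_1(t_0u)t_0u + f_2(t_0v)t_0v\,dx - t_0^qB$ gives $g''(t_0) < 0$ (the computation mirrors the one in the proof of Lemma \ref{lem:1.3}). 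Hence every critical point is a strict local max, so there is exactly one.

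The main obstacle is the bookkeeping in the sign of $g''(t_0)$: I must carefully use the critical point equation to substitute out the $t_0^2A$ term so that the strict inequality \eqref{1.1} (which only controls $f_i'(s)s^2 - f_i(s)s$ against $(q-2)f_i(s)s$, not against the whole expression) actually closes the argument, and I must handle the mixed-sign $-|u|^{q-2}u$ contribution — but since its fibering coefficient $t^q B/q$ has the ``good'' convexity sign relative to the $t^2$ term (as $q>2$), it only helps. A minor technical point is the case where one of $u,v$ vanishes identically: then the corresponding $F_i$ and $|u|^q$ terms drop out and the argument proceeds with the surviving component, which is nonzero by assumption. Finally, once uniqueness of the critical point $t>0$ is established and $g$ is shown to be increasing on $(0,t)$ and decreasing on $(t,\infty)$ (again from $g(t)>0$ near $0$, $g\to-\infty$, and the strict-local-max property ruling out any other critical point), we conclude $\cJ(tu,tv) = \max_{s\geq 0}\cJ(su,sv)$ as claimed. $\Box$
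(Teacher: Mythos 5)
Your proposal is correct, and its existence half (behaviour of the fibering map near $0$ and at infinity) is exactly the paper's argument; the uniqueness half, however, takes a genuinely different route. The paper avoids second derivatives entirely: it reduces to the case $(u,v)\in\cN$ and proves that $\varphi'(t)>0$ on $(0,1)$ and $\varphi'(t)<0$ on $(1,\infty)$ by rewriting $\varphi'(t)=\int_\Omega f_1(u)tu-f_1(tu)u\,dx+\int_\Omega f_2(v)tv-f_2(tv)v\,dx+(t^{q-1}-t)\int_\Omega|u|^q+|v|^q\,dx$ and invoking (F4) together with \eqref{ineq}. You instead show that every positive critical point $t_0$ of $g$ is a strict local maximum via $g''(t_0)<0$; since $t_0^2\,g''(t_0)=\xi'(t_0u,t_0v)(t_0u,t_0v)$, your computation is literally the one already performed in Lemma \ref{lem:1.3}: after eliminating $t_0^2A$ through $g'(t_0)=0$, inequality \eqref{1.1} produces a $(q-2)\int f_i(t_0u)(t_0u)\,dx$ term that cancels the $(q-2)t_0^qB$ contribution and leaves $t_0^2g''(t_0)<-(q-2)t_0^2A<0$. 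Both arguments rest on the same monotonicity hypothesis; yours is more economical in that it recycles Lemma \ref{lem:1.3} verbatim, while the paper's first-order argument needs no second differentiability of the fiber map (so it survives if $f_i$ is merely continuous) and gives directly the monotonicity of $\varphi$ on $(0,1)$ and $(1,\infty)$. One side remark of yours is misleading, though it does not affect validity: the term $\frac{t^q}{q}B$ contributes $+(q-1)t^{q-2}B>0$ to $g''$, i.e.\ it has the \emph{wrong} sign for concavity and does not ``only help''; it is neutralized precisely because (F4) is calibrated with the exponent $q-1$ (so that \eqref{1.1} yields the matching $(q-2)t_0^qB$), and your displayed computation does carry this out correctly.
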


\begin{proof}
Take any $(u,v) \in E \setminus \{(0,0)\}$ and consider the function
$$
\varphi(t) := \cJ(tu,tv)
$$
for $t \geq 0$. Obviously $\varphi(0) = 0$ and
$$
\varphi(t) = \frac{t^2}{2} \|(u,v)\|^2 - t^2 \int_{\Omega} \lambda(x)uv \, dx - \int_{\Omega} F_1(tu) + F_2(tv) \, dx + \frac{t^q}{q} \int_{\Omega} |u|^q + |v|^q \, dx.
$$
In view of (F3) we have $\varphi(t) \to -\infty$ as $t \to \infty$. Using Lemma \ref{lem:1.4} and \eqref{1.2} we gets
$$
\varphi(t) \geq  C t^2
$$
for sufficiently small $t > 0$. Hence there is a maximum point $t_{\max}$ of $t \mapsto \cJ (tu,tv)$ in the interval $(0,\infty)$. While $\varphi$ is of $\cC^1$-class for such $t_{\max}$ we have
$$
0 = \varphi'(t_{\max}) = \cJ'(t_{\max} u, t_{\max} v) (u,v).
$$
Hence $(t_{\max} u, t_{\max} v) \in \cN$. In order to show the uniquencess it is enough to show that for any $(u,v) \in \cN$ the point $t=1$ is the unique maximum of $\varphi$. For $(u,v)  \in \cN$ and $t > 0$ we compute
\begin{align}
\varphi'(t) &= t \|(u,v)\|^2 - 2t \int_{\Omega} \lambda(x) uv \, dx - \int_{\Omega} f_1 (tu)u + f_2(tv)v \, dx + t^{q-1} \int_{\Omega} |u|^q + |v|^q \, dx \nonumber \\
&= \int_{\Omega} f_1(u)tu - f_1(tu)u \, dx + \int_{\Omega} f_2(v)tv - f_2(tv)v \, dx + (t^{q-1} - t) \int_{\Omega} |u|^q + |v|^q \, dx\label{1.3}
\end{align}
For $t > 1$ we have $t^{q-1} - t > 0$ and in view of \eqref{1.2} we have
$$
0 < (1-\delta) \|(u,v)\|^2 \leq \int_{\Omega} f_1(u)u + f_2(v)v \, dx - |u|_q^q + |v|_q^q
$$
and therefore
\begin{equation}\label{ineq}
\int_{\Omega} |u|^q + |v|^q \, dx < \int_{\Omega} f_1(u)u + f_2(v)v \, dx
\end{equation}
Combining \eqref{1.3} with \eqref{ineq} under assumption that $t > 1$ we get
\begin{align*}
\varphi'(t) &< \int_{\Omega} f_1(u)tu - f_1(tu)u \, dx + \int_{\Omega} f_2(v)tv - f_2(tv)v \, dx + (t^{q-1} - t) \int_{\Omega} f_1(u)u + f_2(v)v \, dx \\
&= \int_{\Omega} t^{q-1} f_1 (u)u - f_1(tu)u \, dx + \int_{\Omega} t^{q-1} f_2 (v)v - f_2(tv)v \, dx < 0,
\end{align*}
since (F4) holds. Similarly $\varphi'(t) > 0$ for $t \in (0,1)$ and the proof is completed.
\end{proof}

Define the ground state energy level as
$$
c := \inf_{(u,v) \in \cN} \cJ(u,v).
$$

\begin{Lem}
There holds
$$
c > 0.
$$
\end{Lem}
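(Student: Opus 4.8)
The plan is to use the standard Nehari-manifold trick: on $\cN$ the functional $\cJ$ coincides with the combination $\cJ-\frac1q\,\xi$, where $\xi(u,v):=\cJ'(u,v)(u,v)$, and the coefficient $\frac1q$ (rather than $\frac12$) is chosen precisely so that the $L^q$-terms cancel with a favourable sign. First I would fix $(u,v)\in\cN$, so that $\xi(u,v)=0$, i.e.
\[
\|(u,v)\|^2 - 2\int_{\Omega}\lambda(x)uv\,dx = \int_{\Omega} f_1(u)u + f_2(v)v\,dx - |u|_q^q - |v|_q^q .
\]
Subtracting $\frac1q\xi(u,v)=0$ from $\cJ(u,v)$ as given by \eqref{J}, the terms $\frac1q\int_\Omega |u|^q+|v|^q$ drop out entirely and one is left with
\[
\cJ(u,v)=\Bigl(\tfrac12-\tfrac1q\Bigr)\Bigl(\|(u,v)\|^2 - 2\int_{\Omega}\lambda(x)uv\,dx\Bigr)
+\int_{\Omega}\Bigl(\tfrac1q f_1(u)u - F_1(u)\Bigr)+\Bigl(\tfrac1q f_2(v)v - F_2(v)\Bigr)\,dx .
\]

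Next I would observe that, by \eqref{AR}, both integrands $\frac1q f_i(s)s-F_i(s)$ are nonnegative, so the last integral is $\geq 0$; and since $q>2$ by (F1), the prefactor $\tfrac12-\tfrac1q$ is strictly positive. Bounding the quadratic part from below with \eqref{1.2} gives
\[
\cJ(u,v)\ \geq\ \Bigl(\tfrac12-\tfrac1q\Bigr)\Bigl(\|(u,v)\|^2 - 2\int_{\Omega}\lambda(x)uv\,dx\Bigr)\ \geq\ \Bigl(\tfrac12-\tfrac1q\Bigr)(1-\delta)\,\|(u,v)\|^2
\]
for every $(u,v)\in\cN$.

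Finally, Lemma \ref{lem:1.5} provides a uniform lower bound $\|(u,v)\|\geq\rho>0$ on $\cN$, whence $\cJ(u,v)\geq\bigl(\tfrac12-\tfrac1q\bigr)(1-\delta)\rho^2$ for all $(u,v)\in\cN$, and taking the infimum yields $c\geq\bigl(\tfrac12-\tfrac1q\bigr)(1-\delta)\rho^2>0$. I do not expect a genuine obstacle here: the whole argument is algebraic once Lemma \ref{lem:1.5} is in hand, and the only points requiring a little care are the choice of the weight $\tfrac1q$ in $\cJ-\tfrac1q\xi$ (so that the $L^q$-contribution cancels and what remains, $\tfrac1q f_i(s)s-F_i(s)$, has the right sign via \eqref{AR}, i.e. ultimately via (F4)) and the positivity of $\tfrac12-\tfrac1q$, which is exactly where $q>2$ enters.
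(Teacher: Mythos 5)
Your proof is correct and is essentially the paper's argument: both use \eqref{AR} together with the Nehari constraint to arrive at the same bound $\cJ(u,v)\geq\bigl(\tfrac12-\tfrac1q\bigr)(1-\delta)\|(u,v)\|^2$ via \eqref{1.2}, and then conclude with Lemma \ref{lem:1.5}. The only (immaterial) difference is bookkeeping — you subtract $\tfrac1q\xi$ and discard the nonnegative remainder, while the paper first estimates $F_i(u)\leq\tfrac1q f_i(u)u$ and then invokes the constraint.
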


\begin{proof}
Take $(u,v) \in \cN$ and taking \eqref{AR} and \eqref{1.2} into account, we see that
\begin{align*}
\cJ(u,v) &\geq \frac{1}{2} \left( \|(u,v)\|^2 - 2 \int_{\Omega} \lambda(x) uv \, dx \right) - \frac{1}{q} \int_{\Omega} f_1(u)u + f_2(v)v \, dx + \frac{1}{q} \int_{\Omega} |u|^q + |v|^q \, dx \\
&= \left(\frac{1}{2} - \frac{1}{q} \right) \left( \|(u,v)\|^2 - 2 \int_{\Omega} \lambda(x) uv \, dx \right) \geq \left(\frac{1}{2} - \frac{1}{q} \right) (1 - \delta) \|(u,v)\|^2.
\end{align*}
Hence the statement follows by Lemma \ref{lem:1.5}.
\end{proof}

\begin{Rem}
Observe that from the inequality
$$
\cJ(u,v) \geq \left(\frac{1}{2} - \frac{1}{q} \right) (1 - \delta) \|(u,v)\|^2
$$
it follows that $\cJ$ is coercive, i.e. $\{ (u_n,v_n) \}_{n \geq 1} \subset \cN$ and $\|(u_n, v_n)\| \to \infty$ imply that $$\cJ(u_n, v_n) \to \infty.$$
\end{Rem}

\begin{Rem}\label{rem:1.10}
In view of the coercivity of $\cJ$ on $\cN$, any sequence $\{ (u_n, v_n) \}_{n \geq 1} \subset \cN$ such that $\cJ(u_n, v_n) \to c$ is bounded in $E$.
\end{Rem}

\section{Existence of a ground state in a bounded domain}\label{sect:3}

By Ekeland's variational principle there is a Palais-Smale sequence on $\cN$, i.e. a sequence $\{(u_n,v_n)\}_{n \geq 1}\subset\cN$~such that $\cJ(u_n, v_n) \to c$ and $\left( \cJ \Big|_\cN \right)' (u_n, v_n) \to 0$. Taking $(|u_n|, |v_n|)$ instead of $(u_n, v_n)$ we may assume that $u_n \geq 0$ and $v_n \geq 0$. In view of Remark \ref{rem:1.10} the sequence $\{ (u_n, v_n) \}_{n \geq 1} \subset \cN$ is bounded in $E$. 
Arguing as in Lemma \ref{lem:2.6} we see that $\left( \cJ \Big|_\cN \right)' (u_n, v_n) \to 0$ implies also $\cJ' (u_n, v_n) \to 0$. Hence $\{(u_n, v_n)\}_{n\geq 1}$ is a bounded Palais-Smale sequence for the free functional $\cJ$. Moreover $\cJ$ satisfies the Palais-Smale condition (see eg. \cite[Lemma 2.17]{Willem}) and $\{ (u_n, v_n) \}_{n \geq 1}$ has a convergent subsequence, i.e. (up to a subsequence)
$$
(u_n, v_n) \to (u_0,v_0) \quad \mbox{in} \ E.
$$
Hence $\cJ(u_n,v_n) \to \cJ(u_0,v_0)$ and therefore $\cJ(u_0,v_0) = c$. Thus $(u_0,v_0)$ is a ground state solution and obviously $u_0,v_0 \geq 0$.

\section{Multiplicity result in a bounded domain}\label{sect:4}

We will use the following Fountain Theorem provided by T. Bartsch. 

\begin{Th}[\cite{Bartsch}, {\cite[Theorem 3.6]{Willem}}]\label{Th:Bartsch}
Suppose that $X$ is a Banach space, $\cJ \in \cC^1 (X)$ and $G$ is a compact group. Moreover, assume that for any $k \in \mathbb{N}$ there are $\rho_k > r_k > 0$ such that
\begin{enumerate}
\item[(B1)] $G$ acts isometrically on
$$
X = \bigoplus_{j =0}^\infty X_j,
$$
where $X_j$ are $G$-invariant, $X_j$ are isomorphic to a finite dimensional space $V$ such that the action of $G$ on $V$ is admissible;
\item[(B2)] $a_k := \max_{u \in Y_k, \ \|u\|_X = \rho_k} \cJ(u) \leq 0$, where $Y_k = \bigoplus_{j=0}^k X_j$;
\item[(B3)] $b_k := \inf_{u \in Z_k, \ \|u\|_X = r_k} \cJ(u) \to \infty$ as $k \to \infty$, where $Z_k := \overline{\bigoplus_{j=k}^\infty X_j}$;
\item[(B4)] $\cJ$ satisfies the Palais-Smale condition at every level $c > 0$. 
\end{enumerate}
Then there exists an unbounded sequence of critical points of $\cJ$.
\end{Th}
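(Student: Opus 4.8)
\begin{altproof}{Theorem \ref{Th:Bartsch}}
The plan is to run the standard equivariant minimax machinery: an index (``length'' or $G$-genus) theory adapted to the admissible action, an equivariant Borsuk--Ulam theorem, and an equivariant deformation lemma. Write $B_k := \{u \in Y_k : \|u\|_X \le \rho_k\}$ and $N_k := \{u \in Y_k : \|u\|_X = \rho_k\}$, and introduce the class of equivariant maps
$$
\Gamma_k := \{\gamma \in \cC(B_k, X) : \gamma \text{ is } G\text{-equivariant and } \gamma|_{N_k} = \mathrm{id}\}.
$$
Since $\mathrm{id} \in \Gamma_k$, this class is nonempty, so I would define the candidate critical values
$$
c_k := \inf_{\gamma \in \Gamma_k} \ \max_{u \in B_k} \cJ(\gamma(u)).
$$
Each $c_k$ is finite because the identity gives $c_k \le \max_{B_k} \cJ < \infty$ ($\cJ$ is continuous and $B_k$ is a ball in the finite-dimensional space $Y_k$).

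The core geometric input is the intersection property: for every $\gamma \in \Gamma_k$ one has $\gamma(B_k) \cap Z_k \cap \{\|u\|_X = r_k\} \ne \emptyset$. I would prove this by contradiction. Let $P \colon X \to Y_{k-1}$ be the equivariant projection and consider the equivariant map $u \mapsto (P\gamma(u),\ \|\gamma(u)\|_X - r_k)$ from $B_k \subset Y_k \cong V^{k+1}$ into $Y_{k-1} \times \R \cong V^{k} \times \R$; on $N_k$ it is nonzero, since there $\gamma = \mathrm{id}$ forces $\|\gamma(u)\|_X = \rho_k > r_k$. Admissibility of the $G$-action on $V$ (hence on each $X_j$) is exactly the hypothesis guaranteeing a Borsuk--Ulam type statement: an equivariant map from $B_k$ that is fixed-point-free on the boundary into a space with one fewer $V$-summand must vanish in the interior. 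Such a zero $u$ gives $\gamma(u) \in Z_k$ with $\|\gamma(u)\|_X = r_k$, whence (B3) yields $\max_{B_k} \cJ(\gamma(\cdot)) \ge \cJ(\gamma(u)) \ge b_k$. Taking the infimum over $\Gamma_k$ gives $c_k \ge b_k$.

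Next I would show each $c_k$ is a critical value. Since $b_k \to \infty$, for all large $k$ we have $c_k \ge b_k > 0$, so (B4) supplies the Palais--Smale condition at level $c_k$. If $c_k$ were regular, the equivariant deformation lemma would provide $\varepsilon \in (0,c_k)$ and a $G$-equivariant homeomorphism $\eta$ with $\eta(\{\cJ \le c_k + \varepsilon\}) \subset \{\cJ \le c_k - \varepsilon\}$ and $\eta = \mathrm{id}$ on $\{\cJ \le 0\}$. Choosing $\gamma \in \Gamma_k$ with $\max_{B_k} \cJ(\gamma(\cdot)) \le c_k + \varepsilon$ and setting $\widetilde\gamma := \eta \circ \gamma$, hypothesis (B2) guarantees $\cJ \le 0$ on $N_k$, so $\eta$ fixes $N_k$; hence $\widetilde\gamma \in \Gamma_k$ while $\max_{B_k} \cJ(\widetilde\gamma(\cdot)) \le c_k - \varepsilon$, contradicting the definition of $c_k$. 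Thus each such $c_k$ is a critical value, realized by some critical point $u_k$ with $\cJ(u_k) = c_k$.

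Combining the two halves, $c_k \ge b_k \to \infty$, so the critical values diverge and the associated critical points $u_k$ form an unbounded sequence. The routine parts are the minimax setup and the deformation argument (standard once the Palais--Smale condition and an equivariant pseudo-gradient flow are in hand). The genuine difficulty, and the place where the hypotheses must be used carefully, is the intersection lemma of the second paragraph: it is precisely here that admissibility of the action and the matching of the ``dimensions'' of $Y_k$ and $Z_k$ enter through the equivariant Borsuk--Ulam theorem. Correctly encoding the index/length theory of the admissible action, so that Borsuk--Ulam applies with the right count of $V$-summands and the normalization $\gamma|_{N_k} = \mathrm{id}$ is preserved by the deformation, is the main obstacle.
\end{altproof}
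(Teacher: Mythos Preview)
The paper does not prove this statement at all: Theorem~\ref{Th:Bartsch} is quoted as a known result from \cite{Bartsch} and \cite[Theorem~3.6]{Willem} and is used as a black box in Section~\ref{sect:4}. There is therefore nothing in the paper to compare your argument against.

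That said, your sketch is a faithful outline of the standard proof in \cite[Theorem~3.6]{Willem}: the minimax class $\Gamma_k$ of equivariant maps fixing the boundary sphere $N_k$, the intersection lemma $\gamma(B_k)\cap Z_k\cap\{\|u\|=r_k\}\neq\emptyset$ via the admissibility (equivariant Borsuk--Ulam) hypothesis, and the equivariant deformation lemma combined with (B2) and (B4) to show that each $c_k$ is a critical value. The only place to be careful is the intersection step: the map you write down, $u\mapsto(P\gamma(u),\|\gamma(u)\|_X-r_k)$, lands in $Y_{k-1}\times\R$, and on $N_k$ only the second component is guaranteed nonzero; you need the precise form of admissibility (as in \cite[Definition~3.3, Lemma~3.4]{Willem}) to conclude that an equivariant extension with nonvanishing boundary values over a ball with one more $V$-summand must have an interior zero. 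With that caveat, the argument is correct and coincides with the cited proof.
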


\begin{Lem}\label{lemR}
There is a radius $R > 0$ such that
$$
0 < q \left( F_i(u) - \frac{1}{q} |u|^q \right) \leq f_i(u)u - |u|^q
$$
for $|u| \geq R$.
\end{Lem}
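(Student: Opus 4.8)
The plan is to read the claim as a statement about the single-variable functions $g_i(u) := f_i(u)u - |u|^q$ and $G_i(u) := F_i(u) - \tfrac1q|u|^q$ (so $G_i$ is, up to sign, the ``effective potential'' and $G_i' = f_i - |u|^{q-2}u$). I want two things for $|u|$ large: first, that $G_i(u) > 0$, and second, that the Ambrosetti--Rabinowitz-type inequality $qG_i(u) \le g_i(u)$ holds. Both will be consequences of (F3) together with the monotonicity (F4), and by (F5) it suffices to argue for $u > 0$ and then reflect.

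First I would establish positivity of $G_i$ for large $u$. By (F3), $F_i(u)/|u|^q \to \infty$ as $|u| \to \infty$, so there is $R_1 > 0$ with $F_i(u)/|u|^q > \tfrac1q$, i.e. $G_i(u) = F_i(u) - \tfrac1q|u|^q > 0$, for $|u| \ge R_1$. Next, for the right-hand inequality I would use that $\varphi_i(s) = f_i(s)/|s|^{q-1}$ is increasing on $(0,\infty)$ by (F4): integrating $f_i(s) = \varphi_i(s)s^{q-1}$ from $0$ to $u>0$ and comparing with $\varphi_i(u)$ gives the familiar estimate $q F_i(u) \le f_i(u)u$ (this is exactly \eqref{AR}, already recorded in the excerpt). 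Subtracting $|u|^q$ from both sides yields $q\big(F_i(u) - \tfrac1q|u|^q\big) \le f_i(u)u - |u|^q$ for \emph{all} $u$, so this half needs no largeness at all; it holds globally. Taking $R := R_1$ (and invoking (F5) to cover $u < 0$) then gives both inequalities simultaneously for $|u| \ge R$.

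Actually, one subtlety is worth a second look: the right-hand inequality $qG_i \le g_i$ follows purely from \eqref{AR} and is unconditional, so the only genuine content of the lemma is the strict positivity $0 < qG_i(u)$ for $|u| \ge R$, which is precisely where (F3) enters (note $q>2$, so the pure power $|u|^q$ could a priori dominate $F_i$ without (F3)). I expect no real obstacle here — the main point is simply to package (F3) and \eqref{AR} correctly and to remember that (F5) lets us reduce to $u > 0$. A clean write-up is: choose $R$ from (F3) so that $F_i(u) > \tfrac1q|u|^q$ for $|u| \ge R$; then $qF_i(u) - |u|^q > 0$; and by \eqref{AR}, $qF_i(u) - |u|^q \le f_i(u)u - |u|^q$; chaining these gives the assertion.
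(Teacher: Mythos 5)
Your proposal is correct and follows essentially the same route as the paper: obtain $F_i(u) > \tfrac1q|u|^q$ for $|u|\ge R$ from (F3), then chain with the global inequality \eqref{AR}. Your observation that the right-hand inequality is unconditional and only the strict positivity requires largeness is exactly the content of the paper's two-line proof.
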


\begin{proof}
In view of (F3) we have $F_i (u) > \frac{1}{q} |u|^q$ for sufficiently large $|u| \geq R$. Hence the inequality follows by \eqref{AR}.
\end{proof}

Let $(e_j)$ be an orthonormal basis of $E = H^1_0 (\Omega) \times H^1_0 (\Omega)$, $G = \mathbb{Z}_2 := \mathbb{Z}/2\mathbb{Z}$ and $X_j := \mathbb{R}e_j$. On $E$ we consider the antipodal action of $G$. In view of the Borsuk-Ulam theorem the condition (B1) is satisfied. From Lemma \ref{lemR}, (F3) and \eqref{AR} there is $C > 0$ such that
$$
C ( |u|^q - 1) \leq F_i(u) - \frac{1}{q} |u|^q.
$$
Hence
\begin{align*}
\cJ (u,v) &\leq \frac{1}{2} \left( \|(u,v)\|^2 - 2 \int_\Omega \lambda(x) uv dx \right) - C (|u|_q^q+|v|_q^q) + 2C | \Omega| \\ 
&\leq \frac{1}{2} \left( \|(u,v)\|^2 + 2 \int_\Omega \lambda(x) |u| |v| dx \right) - C (|u|_q^q+|v|_q^q) + 2C | \Omega| \\
&\leq \frac{1}{2} \left( \|(u,v)\|^2 + |\lambda|_\infty \left( \int_\Omega |u|^2 \, dx + \int_\Omega |v|^2 \, dx \right) \right) - C (|u|_q^q+|v|_q^q) + 2C | \Omega|. 
\end{align*}
Since on finite dimensional space $Y_k$ all norms are equivalent, we get
$$
\cJ(u,v) \leq C_1 \|(u,v)\|^2 - C_2 \| (u,v)\|^q + C_3 \quad \mathrm{for} \ (u,v) \in Y_k.
$$
Hence the condition (B2) is satisfied for $\rho_k > 0$ large enough.
From (F1) there is $\tilde{C} > 0$ such that
$$
|F_i(u)| \leq \tilde{C} (1 + |u|^p).
$$
Put $\beta_k := \sup_{(u,v) \in Z_k, \ \|(u,v)\|=1} |u|_p + |v|_p$. Then
\begin{align*}
\cJ(u,v) &\geq \frac{1}{2} \left( \|(u,v)\|^2 - 2 \int_\Omega \lambda(x) uv \right) - \tilde{C} |u|_p^p - \tilde{C} |v|_p^p - 2 \tilde{C} |\Omega| \\
&\geq \frac{1-\delta}{2} \|(u,v)\|^2 - 2\tilde{C} \beta_k^p \|(u,v)\|^p - 2 \tilde{C} |\Omega|.
\end{align*}
Let $r_k := (2 \tilde{C} \frac{p}{1-\delta} \beta_k^p )^{1/(2-p)}$. Hence for $(u,v) \in Z_k$ and $\|(u,v)\| = r_k$ we get
\begin{align*}
\cJ(u,v) &\geq \frac{1-\delta}{2} \left(2 \tilde{C} \frac{p}{1-\delta} \beta_k^p \right)^{2/(2-p)} - 2 \tilde{C} \beta_k^p \left(2 \tilde{C} \frac{p}{1-\delta} \beta_k^p \right)^{p/(2-p)} - 2 \tilde{C} |\Omega| \\
&= \left( \frac{1-\delta}{2} -\frac{1-\delta}{ p } \right) \left(2 \tilde{C} \frac{p}{1-\delta} \beta_k^p \right)^{2/(2-p)}  - 2 \tilde{C} |\Omega| \\
&= (1-\delta) \left( \frac{1}{2} -\frac{1}{ p } \right) \left(2 \tilde{C} \frac{p}{1-\delta} \beta_k^p \right)^{2/(2-p)}  - 2 \tilde{C} |\Omega|.
\end{align*}
Hence it is enough to show that $\beta_k \to 0^+$. Clearly $0 \leq \beta_{k+1} \leq \beta_k$. Hence $\beta_k \to \beta$ and for any $k \geq 0$ there is $(u_k,v_k) \in Z_k$ such that $\|(u_k,v_k)\| = 1$ and $|u_k|_p + |v_k|_p > \frac{\beta_k}{2}$. In view of the definition of $Z_k$ we have $(u_k,v_k) \weakto (0,0)$ in $H^1_0 (\Omega) \times H^1_0 (\Omega)$. In view of Sobolev embeddings we obtain $|u_k|_p + |v_k|_p \to 0$ and therefore $\beta_k \to 0$ and (B3) is proved. It is classical to check that (B4) is satisfied, see e.g. \cite[Lemma 2.17]{Willem}.


Hence, in view of Theorem \ref{Th:Bartsch} and coercivity of $\cJ$ on $\cN$ there exists a sequence of solutions $(u_n, v_n)$ such that $\cJ(u_n,v_n) \to \infty$ and the proof of Theorem \ref{Th:Main2} is completed.

\section{Existence of a ground state in $\R^N$}\label{sect:5}

By Ekeland's variational principle there is a Palais-Smale sequence on $\cN$, i.e. a sequence $\{ (u_n, v_n) \}_{n \geq 1} \subset \cN$ such that $\cJ(u_n, v_n) \to c$ and $\left( \cJ \Big|_\cN \right)' (u_n, v_n) \to 0$. In view of Remark \ref{rem:1.10} the sequence $\{ (u_n, v_n) \}_{n \geq 1} \subset \cN$ is bounded in $E$. Passing to a subsequence we may assume that
\begin{align*}
(u_n, v_n) &\weakto (u_0, v_0) \quad \mbox{in} \ E, \\
(u_n, v_n) &\to (u_0, v_0) \quad \mbox{in} \ L^t_{loc} (\R^N) \times L^t_{loc} (\R^N) \ \mbox{for every} \ 2 \leq t < 2^*, \\
(u_n(x), v_n(x)) &\to (u_0(x), v_0(x)) \quad \mbox{for a.e. } x \in \Omega.
\end{align*}
Take any $(\varphi, \psi) \in \cC_0^\infty (\R^N) \times \cC_0^\infty (\R^N)$ and see that
\begin{align*}
\cJ' (u_n, v_n) ( \varphi, \psi) &= \langle (u_n, v_n), (\varphi, \psi) \rangle - \int_{\R^N} \lambda(x) u_n \psi \, dx - \int_{\R^N} \lambda(x) v_n \varphi \, dx \\ &\quad - \int_{\R^N} f_1 (u_n) \varphi + f_2 (v_n) \psi \, dx + \int_{\R^N} |u_n|^{q-2}u_n\varphi + |v_n|^{q-2}v_n\psi \, dx.
\end{align*}
In view of the weak convergence we have
\begin{align*}
\langle (u_n, v_n), (\varphi, \psi) \rangle &\to \langle (u_0, v_0), (\varphi, \psi) \rangle, \\
\int_{\R^N} \lambda(x) u_n \psi \, dx &\to \int_{\R^N} \lambda(x) u_0 \psi \, dx, \\
\int_{\R^N} \lambda(x) v_n \varphi \, dx &\to \int_{\R^N} \lambda(x) v_0 \varphi \, dx.
\end{align*}
From the Lebesgue's dominated convergence theorem there hold
\begin{align*}
\int_{\R^N} |u_n|^{q-2}u_n\varphi \, dx &\to \int_{\R^N} |u_0|^{q-2}u_0\varphi \, dx \\
\int_{\R^N} |v_n|^{q-2}v_n\psi \, dx &\to \int_{\R^N} |v_0|^{q-2}v_0\psi \, dx.
\end{align*}
Let $K \subset \R^N$ be a compact set containing supports of $\varphi$ and $\psi$. Then
$$
(u_n, v_n) \to (u_0, v_0) \quad \mbox{in} \ L^t (K) \times L^t (K) \ \mbox{for every} \ 2 \leq t < 2^*.
$$
From the continuity of the Nemytskii operator we obtain the convergence
$$
\int_{K} f_1 (u_n) \varphi  \, dx \to \int_{K} f_1 (u_0) \varphi  \, dx.
$$
Similarly 
$$
\int_{K} f_2 (v_n) \psi  \, dx \to \int_{K} f_2 (v_0) \psi  \, dx.
$$
Hence
$$
\cJ'(u_n,v_n) (\varphi, \psi) \to \cJ'(u_0, v_0) (\varphi, \psi).
$$


Similarly we can show that 
$$
\left( \cJ \Big|_\cN \right)' (u_n, v_n) \to \left( \cJ \Big|_\cN \right)' (u_0, v_0) 
$$
and therefore $\left( \cJ \Big|_\cN \right)' (u_0, v_0) = 0$. In view of Lemma \ref{lem:2.6} we obtain that $\cJ'(u_0,v_0) = 0$, i.e. $(u_0,v_0)$ is a critical point of $\cJ$. If $(u_0,v_0) \neq (0,0)$ we are done. Hence assume that $(u_0, v_0) = (0,0)$. We will use the following concentration-compactness result due to P.-L. Lions.

\begin{Lem}[{\cite[Lemma 1.21]{Willem}}]\label{lem:lions}
Let $r > 0$ and $2 \leq s < 2^*$. If $\{ w_n \}$ is bounded in $H^1 (\R^N)$ and if
\begin{equation}\label{lions}
\sup_{y \in \R^N} \int_{B(y,r)} |w_n|^s \, dx \to 0 \quad \mbox{as} \ n\to\infty,
\end{equation}
then $w_n \to 0$ in $L^t (\R^N)$ for $2 < t < 2^*$.
\end{Lem}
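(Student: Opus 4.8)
The plan is to recover this classical vanishing lemma of P.-L.\ Lions by the standard covering argument: one bounds the global $L^t$ norm by a sum of local $L^t$ bounds over balls of radius $r$, each of which is controlled by a positive power of the (vanishing) local $L^s$ mass times a power of the (bounded) local $H^1$ energy, using the Sobolev embedding $H^1 \hookrightarrow L^{2^*}$ together with H\"older interpolation.

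First I would reduce to a single exponent. Since $\{w_n\}$ is bounded in $H^1(\R^N)$, it is bounded in $L^2(\R^N)$ and, by the Sobolev inequality, in $L^{2^*}(\R^N)$; hence, if $w_n \to 0$ in $L^{t_0}(\R^N)$ for one $t_0 \in (2,2^*)$, then for every $t \in (2,2^*)$ one of the interpolation estimates
$$\|w_n\|_{L^t} \le \|w_n\|_{L^2}^{\alpha}\|w_n\|_{L^{t_0}}^{1-\alpha} \quad (2 < t \le t_0), \qquad \|w_n\|_{L^t} \le \|w_n\|_{L^{t_0}}^{\beta}\|w_n\|_{L^{2^*}}^{1-\beta} \quad (t_0 \le t < 2^*),$$
with appropriate $\alpha,\beta \in (0,1)$, forces $w_n \to 0$ in $L^t(\R^N)$. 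So it suffices to treat one convenient value $t_0$.

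Next comes the local estimate. I would take $t_0 := 2 + \frac{2s}{N}$; a short computation with $2 \le s < 2^*$ gives $\max\{2,s\} < t_0 < 2^*$ and shows that $\mu := \frac{2}{t_0} \in (0,1)$ satisfies $\frac{1}{t_0} = \frac{1-\mu}{s} + \frac{\mu}{2^*}$. By translation invariance the Sobolev inequality $\|w\|_{L^{2^*}(B(y,r))} \le C\|w\|_{H^1(B(y,r))}$ holds with $C = C(N,r)$ independent of $y$; combining it with H\"older interpolation between $L^s(B(y,r))$ and $L^{2^*}(B(y,r))$ and raising to the power $t_0$ (using $\mu t_0 = 2$) yields, for every $w \in H^1(\R^N)$ and every $y \in \R^N$,
$$\int_{B(y,r)} |w|^{t_0}\,dx \le C\left(\int_{B(y,r)} |w|^{s}\,dx\right)^{2/N}\left(\int_{B(y,r)} |\nabla w|^2 + |w|^2\,dx\right),$$
with $C$ uniform in $y$. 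Then I would choose a countable family $\{B(y_i,r)\}_i$ with centres on the lattice $\frac{r}{\sqrt{N}}\mathbb{Z}^N$, covering $\R^N$ with bounded overlap $\sum_i \mathbf{1}_{B(y_i,r)} \le K(N)$, apply the displayed estimate to $w = w_n$, bound the first factor by $\varepsilon_n^{2/N}$ where $\varepsilon_n := \sup_{y \in \R^N}\int_{B(y,r)}|w_n|^s\,dx \to 0$ by \eqref{lions}, and sum over $i$ to obtain
$$\int_{\R^N} |w_n|^{t_0}\,dx \le C\,\varepsilon_n^{2/N}\sum_i \int_{B(y_i,r)} |\nabla w_n|^2 + |w_n|^2\,dx \le C\,K(N)\,\varepsilon_n^{2/N}\,\|w_n\|_{H^1(\R^N)}^2 \longrightarrow 0,$$
since $\{w_n\}$ is bounded in $H^1$. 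Thus $w_n \to 0$ in $L^{t_0}(\R^N)$, and the reduction step completes the proof.

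I do not expect a genuine obstacle here: the lemma is standard and is only quoted from \cite{Willem}. The two points needing a little care are verifying that $t_0 = 2 + \frac{2s}{N}$ lies strictly between $\max\{2,s\}$ and $2^*$ precisely because $2 \le s < 2^*$ (so the interpolation exponent $\mu = 2/t_0$ is admissible), and fixing a covering of $\R^N$ by balls of the prescribed radius $r$ with overlap bounded by a constant depending only on $N$, which is exactly what lets the sum of local Dirichlet energies be controlled, uniformly in $n$, by the single quantity $\|w_n\|_{H^1(\R^N)}^2$.
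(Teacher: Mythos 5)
Your proof is correct, and since the paper does not prove this lemma at all (it simply quotes it as \cite[Lemma 1.21]{Willem}), the right comparison is with that reference: your argument --- interpolating $L^{t_0}$ between $L^s$ and $L^{2^*}$ on balls with $t_0 = 2 + \tfrac{2s}{N}$ so that the Sobolev factor appears squared, then summing over a bounded-overlap covering and finally interpolating to all $t \in (2,2^*)$ --- is essentially Willem's own proof. All the exponent bookkeeping ($\mu t_0 = 2$, $(1-\mu)t_0/s = 2/N$, $\max\{2,s\} < t_0 < 2^*$) checks out.
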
 

Assume that
\begin{equation}\label{eq:5.2}
\sup_{y \in \R^N} \int_{B(y,1)} |u_n|^2 + |v_n|^2 \, dx \to 0 \quad \mbox{as} \ n \to \infty. 
\end{equation}
In view of Lemma \ref{lem:lions} we get $u_n \to 0$ and $v_n \to 0$ in $L^{t} (\R^N)$ for all $t \in (2,2^*)$. Then
\begin{align*}
(1-\delta) \| (u,v)\|^2 &\leq \| (u_n, v_n) \|^2 - 2 \int_{\R^N} \lambda(x) u_n v_n \, dx \\ &=  \int_{\R^N} f_1 (u_n)u_n + f_2 (v_n)v_n \, dx - \int_{\R^N} |u_n|^q + |v_n|^q \, dx \\
&= \int_{\R^N} f_1 (u_n)u_n + f_2 (v_n)v_n \, dx + o(1).
\end{align*}
From Lemma \ref{lem:1.4} we get
$$
\left| \int_{\R^N} f_1 (u_n) u_n \, dx \right| \leq \varepsilon |u_n|_2^2 + C_\varepsilon |u_n|_p^p.
$$
In view of boundedness of $\{ u_n \}$ we obtain that 
$$
\int_{\R^N} f_1 (u_n) u_n \, dx \to 0.
$$
Similarly 
$$
\int_{\R^N} f_2 (v_n) v_n \, dx \to 0
$$
and therefore $\|(u_n, v_n)\| \to 0$ - a contradiction with Lemma \ref{lem:1.5}. Hence \eqref{eq:5.2} cannot hold. Hence there is a sequence $(z_n) \subset \mathbb{Z}^N$ such that
\begin{equation}\label{eq:5.3}
\liminf_{n\to\infty} \int_{B(z_n, 1+\sqrt{N})} |u_n|^2 + |v_n|^2 \, dx > 0.
\end{equation}
It is classical to check that that $|z_n| \to \infty$. Moreover $(u_n (\cdot - z_n), v_n (\cdot - z_n) ) \weakto (\tilde{u}, \tilde{v})$ in $H$ and in view of \eqref{eq:5.3} we have $(\tilde{u}, \tilde{v}) \neq (0,0)$. Define $\tilde{u}_n := u_n (\cdot - z_n)$ and $\tilde{v}_n := v_n (\cdot - z_n)$. Then similarly as before
\begin{align*}
\cJ'(\tilde{u}_n, \tilde{v}_n) (\varphi, \psi) &\to \cJ'(\tilde{u}, \tilde{v}) (\varphi, \psi) \quad \mbox{for all } (\varphi, \psi) \in \cC_0^\infty (\R^N) \times \cC_0^\infty (\R^N).
\end{align*}
In view of $\mathbb{Z}^N$-periodicity of $V_1, V_2$ and $\lambda$ we also have 
\begin{align*}
\cJ'(\tilde{u}_n, \tilde{v}_n) (\varphi, \psi) &\to 0 \quad \mbox{for all } (\varphi, \psi) \in \cC_0^\infty (\R^N) \times \cC_0^\infty (\R^N).
\end{align*}
and therefore, in view of Lemma \ref{lem:2.6}, we obtain that $(\tilde{u}, \tilde{v})$ is a nontrivial critical point of $\cJ$, in particular $\cJ(\tilde{u},\tilde{v}) \geq c$. In view of $\mathbb{Z}^N$-periodicity of $V_1, V_2$ and $\lambda$ we have $\cJ(u_n, v_n) = \cJ(\tilde{u}_n, \tilde{v}_n) \to c$. If
$$
\sup_{y \in \R^N} \int_{B(y,1)} |\tilde{u}_n - \tilde{u}|^2 +  |\tilde{v}_n - \tilde{v}|^2 \, dx \to 0
$$
then in view of Lemma \ref{lem:lions} we obtain $\tilde{u}_n \to \tilde{u}$ and $\tilde{v}_n \to \tilde{v}$ in $L^t (\R^N)$ for all $t \in (2,2^*)$ and, as before, $(\tilde{u}_n, \tilde{v}_n) \to (\tilde{u}, \tilde{v})$ and $(\tilde{u}, \tilde{v})$ is a ground state. Otherwise there are $(\tilde{z}_n) \subset \Z^N$ such that
$$
\liminf_{n\to\infty} \int_{B(\tilde{z}_n, 1+\sqrt{N})} |\tilde{u}_n - \tilde{u}|^2 +  |\tilde{v}_n - \tilde{v}|^2 \, dx > 0
$$
and similarly
$$
(\bar{u}_n, \bar{v}_n) := (\tilde{u}_n (\cdot - \tilde{z}_n), \tilde{v}_n (\cdot - \tilde{z}_n)) \weakto (\bar{u},\bar{v}) \neq (0,0); \quad \cJ'(\bar{u},\bar{v}) = 0.
$$
Repeating this argument we obtain the following decomposition lemma (for more details see eg. \cite[Theorem 4.1]{BieganowskiMederski}).

\begin{Lem}\label{decomp}
There are $\ell \geq 0$, $(z_n^k) \subset \Z^N$ and $(w_1^k, w_2^k) \in E$, where $k = 1, \ldots, \ell$, such that
\begin{enumerate}
\item[(i)] $(w_1^k, w_2^k) \neq (0,0)$ and $\cJ' (w_1^k, w_2^k) = 0$;
\item[(ii)] $\left\| \left( u_n - u_0 - \sum_{k=1}^\ell w_1^k (\cdot - z_n^k), v_n - v_0 - \sum_{k=1}^\ell w_2^k (\cdot - z_n^k) \right) \right\| \to 0$;
\item[(iii)] $\cJ (u_n, v_n) \to \cJ(u_0,v_0) + \sum_{k=1}^\ell \cJ (w_1^k, w_2^k)$.
\end{enumerate}
\end{Lem}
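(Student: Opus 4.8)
\textbf{Proof strategy for Lemma \ref{decomp}.} The plan is to iterate the vanishing/concentration dichotomy established just above, peeling off one ``bubble'' at each step, and to show that the procedure terminates after finitely many steps because each extracted profile carries a fixed minimal amount of $E$-norm (and of energy).

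Set $(u_n^0, v_n^0) := (u_n - u_0, v_n - v_0)$. Since $V_1, V_2, \lambda$ are $\Z^N$-periodic and $(u_0,v_0)$ is a critical point of $\cJ$, a Brezis--Lieb type splitting of the terms $\int F_i$, $\int |\cdot|^q$, and of the corresponding derivative terms, shows that $(u_n^0, v_n^0)$ is again a bounded Palais--Smale sequence for $\cJ$ with $(u_n^0, v_n^0) \weakto (0,0)$, that $\|(u_n^0,v_n^0)\|^2 = \|(u_n,v_n)\|^2 - \|(u_0,v_0)\|^2 + o(1)$, and that $\cJ(u_n^0, v_n^0) \to c - \cJ(u_0,v_0)$. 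Now suppose inductively that for some $k \geq 0$ we have a bounded Palais--Smale sequence $(u_n^k, v_n^k) \weakto (0,0)$ for $\cJ$. If $(u_n^k, v_n^k) \to (0,0)$ strongly in $E$, we stop and put $\ell := k$. Otherwise \eqref{eq:5.2} fails for $(u_n^k,v_n^k)$, so, arguing exactly as in the passage preceding the lemma, there is $(z_n^{k+1}) \subset \Z^N$ with $\liminf_n \int_{B(z_n^{k+1},\,1+\sqrt N)} |u_n^k|^2 + |v_n^k|^2 \, dx > 0$; passing to a subsequence, $(u_n^k(\cdot - z_n^{k+1}), v_n^k(\cdot - z_n^{k+1})) \weakto (w_1^{k+1}, w_2^{k+1}) \neq (0,0)$, and by $\Z^N$-periodicity of $V_1,V_2,\lambda$ (so that translations by $\Z^N$ are isometries of $\|\cdot\|_i$) together with a Lemma \ref{lem:2.6}-type argument, $(w_1^{k+1}, w_2^{k+1})$ is a nontrivial critical point of $\cJ$. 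Finally set $(u_n^{k+1}, v_n^{k+1}) := \big(u_n^k - w_1^{k+1}(\cdot - z_n^{k+1}),\ v_n^k - w_2^{k+1}(\cdot - z_n^{k+1})\big)$.

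At each step one must verify the standard bookkeeping: (a) $(u_n^{k+1}, v_n^{k+1})$ is again a bounded Palais--Smale sequence for $\cJ$ with weak limit $(0,0)$; (b) the norms split orthogonally, $\|(u_n^{k+1},v_n^{k+1})\|^2 = \|(u_n^k,v_n^k)\|^2 - \|(w_1^{k+1},w_2^{k+1})\|^2 + o(1)$; and (c) the energies split, $\cJ(u_n^{k+1}, v_n^{k+1}) = \cJ(u_n^k, v_n^k) - \cJ(w_1^{k+1}, w_2^{k+1}) + o(1)$, again via the Brezis--Lieb lemma applied to $\int F_i$ and $\int|\cdot|^q$. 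For (a)--(c) one also needs $|z_n^{k+1}| \to \infty$ (otherwise, along a subsequence $z_n^{k+1}$ would be constant, and the translated weak limit would already be ``seen'' by $(u_n^k,v_n^k) \weakto (0,0)$, a contradiction), and, inductively, $|z_n^{k+1} - z_n^j| \to \infty$ for $j \leq k$, which holds because the profiles at the sites $z_n^1,\dots,z_n^k$ have already been removed from $(u_n^k,v_n^k)$.

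Termination: by Lemma \ref{lem:1.5}, every nontrivial critical point $(w_1^j,w_2^j) \in \cN$ satisfies $\|(w_1^j,w_2^j)\| \geq \rho$, so from (b), $k\rho^2 \leq \sum_{j=1}^{k}\|(w_1^j,w_2^j)\|^2 \leq \limsup_n \|(u_n,v_n)\|^2 < \infty$; hence the procedure stops at some finite $\ell \geq 0$. (In fact, since $\cJ(w_1^j,w_2^j) \geq c > 0$ for each profile, $\cJ(u_0,v_0) \geq 0$, and the energies telescope to $\lim_n \cJ(u_n,v_n) = c$, one even gets a sharp bound on $\ell$.) When the process stops we have $(u_n^\ell, v_n^\ell) \to (0,0)$ in $E$; unwinding the definitions of the $(u_n^k,v_n^k)$ yields (ii), the telescoped identities from (c) give (iii), and (i) holds by construction. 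The main obstacle is precisely the asymptotic orthogonality and Brezis--Lieb bookkeeping in (a)--(c) in the presence of the $\Z^N$-periodic coupling term $\int \lambda(x)uv$; everything else is the routine Lions iteration. For the complete details we refer to \cite[Theorem 4.1]{BieganowskiMederski}.
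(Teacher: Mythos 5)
Your proposal is correct and follows essentially the same route as the paper, which itself only sketches the iteration of the concentration--compactness dichotomy and defers the bookkeeping to \cite[Theorem 4.1]{BieganowskiMederski}; your write-up simply makes explicit the Brezis--Lieb splitting, the separation of the translation sequences, and the termination via the uniform lower bound of Lemma \ref{lem:1.5} (equivalently, via $\cJ(w_1^k,w_2^k)\geq c>0$).
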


While we assumed that $(u_0,v_0) = (0,0)$, from Lemma \ref{decomp}(iii) we get
$$
c + o(1) = \cJ(u_n, v_n) \to \sum_{k=1}^\ell \cJ (w_1^k, w_2^k) \geq \ell c.
$$
Hence $c \geq \ell c$ and therefore $\ell \in \{0,1\}$. While $(u_0,v_0) = 0$ we cannot have $\ell = 0$ and therefore $\ell = 1$, and $(w_1^1, w_2^1)$ is a ground state solution.


\cite[Theorem 2]{PankovDecay} gives the continuity and exponential decay of the solution.

\section{Multiplicity of solutions in $\R^N$}\label{sect:6}

To show Theorem \ref{Th:Main4} we will adapt the argument from \cite{SzulkinWeth} to our context. Let $\tau_k$ denote the action of $(\mathbb{Z}^N, +)$ on $E$, i.e.
$\tau_k (u,v) := (u(\cdot -k), v(\cdot - k))$, where $k \in \mathbb{Z}^N$. It is easy to show that
$$
\tau_k \cN \subset \cN,
$$ 
i.e. $\cN$ is invariant under $\tau_k$. Similarly $\| \tau_k (u,v) \| = \| (u,v)\|$ and $\cJ(\tau_k (u,v)) = \cJ(u,v)$. Since $\cJ$ is invariant, we know that $\nabla \cJ$ is equivariant.

Fix any $(u,v) \in E \setminus\{(0,0)\}$. Let
$$
S := \{ (u,v) \in E \ : \ \| (u,v)\| = 1 \}.
$$
Then there exists unique $t_{(u,v)} > 0$ such that $(t_{(u,v)}u,t_{(u,v)}v) \in \cN$. Define
$$
m : S \rightarrow \cN
$$
by the formula $m(u,v) := (t_{(u,v)} u,t_{(u,v)} v)$. Obviously $m$ is bijection and the inverse is given by
$$
m^{-1} (u, v) = \left( \frac{u}{\|(u,v)\|}, \frac{v}{\|(u,v)\|} \right).
$$

\begin{Lem}
The function $m : S \rightarrow \cN$ is a local diffeomorphism of class $\cC^1$.
\end{Lem}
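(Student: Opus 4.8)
The plan is to prove the stronger statement that $m$ is a global $\cC^1$-diffeomorphism of $S$ onto $\cN$, which of course implies it is a local diffeomorphism; the strategy is to show that both $m$ and the explicitly given inverse $m^{-1}$ are of class $\cC^1$, and then to differentiate the identities $m^{-1}\circ m=\mathrm{id}_S$ and $m\circ m^{-1}=\mathrm{id}_\cN$ to conclude that $dm$ is everywhere a linear isomorphism. Beforehand I would record two structural facts. First, since the norm $\|\cdot\|$ on $E$ is Hilbertian, the map $(u,v)\mapsto\|(u,v)\|^2$ is $\cC^\infty$ with non-vanishing differential on $S$, so $S$ is a $\cC^\infty$-submanifold of $E$ of codimension one. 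Second, $\cN$ is a $\cC^1$-submanifold of $E$ by Lemma \ref{lem:1.3}, and by Lemma \ref{lem:1.5} it is contained in the open set $E\setminus\{(0,0)\}$, so that the formula for $m^{-1}$ makes sense on all of $\cN$.

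The inverse direction is the easy one: $m^{-1}$ is the restriction to $\cN$ of the normalization map $(u,v)\mapsto (u,v)/\|(u,v)\|$, which is $\cC^\infty$ on $E\setminus\{(0,0)\}$; restricting a smooth ambient map to the $\cC^1$-submanifold $\cN$ (with values in the submanifold $S$) gives $m^{-1}\in\cC^1(\cN,S)$.

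For $m$ itself I would reduce the problem to showing that the ``projection time'' map $E\setminus\{(0,0)\}\ni (u,v)\mapsto t_{(u,v)}\in(0,\infty)$ is of class $\cC^1$: granting this, $m$ is the restriction to $S$ of the $\cC^1$ map $(u,v)\mapsto t_{(u,v)}(u,v)$, which takes values in the embedded $\cC^1$-submanifold $\cN$, hence $m\in\cC^1(S,\cN)$. To get $\cC^1$-dependence of $t_{(u,v)}$ I would invoke the implicit function theorem on the Banach space $E$. With $\xi(u,v):=\cJ'(u,v)(u,v)$ as in the proof of Lemma \ref{lem:1.3}, define $H\colon (E\setminus\{(0,0)\})\times(0,\infty)\to\R$ by $H((u,v),t):=\xi(tu,tv)$. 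One checks, as is classical from (F1) and $p<2^*$, that $\cJ\in\cC^2(E)$ and hence $H\in\cC^1$. For fixed $(u,v)$ the equation $H((u,v),t)=0$ has exactly the solution $t=t_{(u,v)}$ by Lemma \ref{lem:1.7}, and $\partial_t H((u,v),t)=\xi'(tu,tv)(u,v)=\tfrac1t\,\xi'(tu,tv)(tu,tv)$, which, evaluated at $t=t_{(u,v)}$ where $(tu,tv)\in\cN$, is strictly negative by the computation carried out in the proof of Lemma \ref{lem:1.3}. The implicit function theorem then produces, near every point of $E\setminus\{(0,0)\}$, a $\cC^1$ solution of $H(\,\cdot\,,t)=0$, which by the uniqueness in Lemma \ref{lem:1.7} must coincide with $(u,v)\mapsto t_{(u,v)}$; hence that map is $\cC^1$ on $E\setminus\{(0,0)\}$.

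Combining the two directions, $m$ and $m^{-1}$ are mutually inverse $\cC^1$ maps, so the chain rule forces $dm_{(u,v)}\colon T_{(u,v)}S\to T_{m(u,v)}\cN$ to be a linear isomorphism for each $(u,v)\in S$; thus $m$ is a $\cC^1$-diffeomorphism, in particular a local one. I expect the only non-routine point to be the regularity input $\cJ\in\cC^2(E)$ (equivalently $\xi\in\cC^1(E)$) needed for the implicit function theorem; this follows from the $\cC^1$-regularity and the subcritical growth of $f_i$ in (F1)--(F2) via the standard Nemytskii-operator argument, and it is the step I would write out most carefully.
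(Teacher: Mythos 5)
Your proposal is correct and follows essentially the same route as the paper: the paper also obtains $\cC^1$-regularity of $(u,v)\mapsto t_{(u,v)}$ from the implicit function theorem applied to $\xi(u,v)=\cJ'(u,v)(u,v)$ and then restricts $\hat m(u,v)=(t_{(u,v)}u,t_{(u,v)}v)$ to $S$. You merely spell out the two points the paper leaves implicit --- the non-degeneracy $\xi'(tu,tv)(tu,tv)<0$ from Lemma \ref{lem:1.3} and the $\cC^1$-regularity of $\xi$ via (F1) --- and additionally record that $m$ is in fact a global diffeomorphism.
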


\begin{proof}
Let $\xi : E \setminus \{(0,0)\} \rightarrow \R$ be given by
$$
\xi(u,v) := \cJ'(u,v)(u,v).
$$
Fix $(u,v) \in E \setminus \{(0,0)\}$. From the proof of Lemma \ref{lem:1.7} there is a unique $t_{(u,v)}$ such that
$$
\xi\left( t_{(u,v)} u, t_{(u,v)} v \right) = 0.
$$
From the Implicit Function Theorem
$$
E \ni (u,v) \mapsto t_{(u,v)} \in \R \setminus \{0\}
$$
is of $\cC^1$-class and therefore
$$
\hat{m} : E \setminus \{ (0,0) \} \rightarrow \cN, \quad \hat{m}(u,v) = \left( t_{(u,v)} u, t_{(u,v)} v \right)
$$
is of $\cC^1$-class. Clearly, the restriction $m = \hat{m} \Big|_{S}$ is a local diffeomorphism.
\end{proof}

Similarly as in \cite[Lemma 5.6]{Bieganowski} we show that $m : S \rightarrow \cN$, $m^{-1} : \cN \rightarrow S$ and $\nabla (\cJ \circ m) :S \rightarrow E$ are $\tau_k$-equivariant.

\begin{Lem}\label{lem:lip}
The function $m^{-1} : \cN \rightarrow S$ is Lipschitz continuous.
\end{Lem}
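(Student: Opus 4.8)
The plan is to exploit the explicit formula $m^{-1}(u,v) = \bigl(u/\|(u,v)\|,\, v/\|(u,v)\|\bigr)$ together with the uniform lower bound $\|(u,v)\| \geq \rho > 0$ on $\cN$ from Lemma \ref{lem:1.5}. First I would fix two points $(u_1,v_1),(u_2,v_2) \in \cN$, write $r_i := \|(u_i,v_i)\|$, and estimate
$$
\bigl\| m^{-1}(u_1,v_1) - m^{-1}(u_2,v_2) \bigr\| = \left\| \left( \frac{u_1}{r_1} - \frac{u_2}{r_2},\ \frac{v_1}{r_1} - \frac{v_2}{r_2} \right) \right\|.
$$
The standard trick is to add and subtract $(u_2/r_1, v_2/r_1)$, giving
$$
\bigl\| m^{-1}(u_1,v_1) - m^{-1}(u_2,v_2) \bigr\| \leq \frac{1}{r_1} \bigl\| (u_1,v_1) - (u_2,v_2) \bigr\| + \left| \frac{1}{r_1} - \frac{1}{r_2} \right| \bigl\| (u_2,v_2) \bigr\|.
$$
The first term is at most $\rho^{-1}\|(u_1,v_1)-(u_2,v_2)\|$ by Lemma \ref{lem:1.5}. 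For the second term, note $\bigl|\tfrac{1}{r_1} - \tfrac{1}{r_2}\bigr| \cdot r_2 = \tfrac{|r_2 - r_1|}{r_1} \leq \rho^{-1}|r_1 - r_2|$, and $|r_1 - r_2| = \bigl|\,\|(u_1,v_1)\| - \|(u_2,v_2)\|\,\bigr| \leq \|(u_1,v_1) - (u_2,v_2)\|$ by the reverse triangle inequality. Combining, $m^{-1}$ is Lipschitz with constant $2\rho^{-1}$.

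I do not anticipate a genuine obstacle here; the only thing to be careful about is that the bound $\|(u,v)\| \geq \rho$ is used for the \emph{denominators}, which are precisely the norms of points in $\cN$ — exactly where Lemma \ref{lem:1.5} applies — while the numerators are controlled purely by the triangle inequality and need no lower bound. One could alternatively phrase the argument via the fact that the radial projection $(u,v) \mapsto (u,v)/\|(u,v)\|$ is locally Lipschitz on $E \setminus \{(0,0)\}$ with local constant controlled by the reciprocal of the distance to the origin, and then invoke $\operatorname{dist}(\cN, (0,0)) \geq \rho$; this is the same computation repackaged. I would present the direct estimate above, as it is self-contained and quantitative.
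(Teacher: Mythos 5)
Your proposal is correct and follows essentially the same route as the paper: the same add-and-subtract decomposition of $\bigl(u_1/r_1 - u_2/r_2,\, v_1/r_1 - v_2/r_2\bigr)$, the reverse triangle inequality for the norm difference, and the lower bound $\|(u,v)\| \geq \rho$ from Lemma \ref{lem:1.5}, yielding the same Lipschitz constant $2/\rho$. Nothing is missing.
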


\begin{proof}
Fix $(u,v), (\tilde{u}, \tilde{v}) \in \cN$. See that
\begin{align*}
&\quad \left\| m^{-1}(u,v) - m^{-1} (\tilde{u},\tilde{v}) \right\| = \left\| \left( \frac{u}{\|(u,v)\|} - \frac{\tilde{u}}{\|(\tilde{u},\tilde{v})\|}, \frac{v}{\|(u,v)\|} - \frac{\tilde{v}}{\|(\tilde{u},\tilde{v})\|} \right) \right\| \\
&= \left\| \left( \frac{u-\tilde{u}}{\|(u,v)\|} + \frac{\tilde{u} \| (\tilde{u},\tilde{v})\| - \tilde{u} \| (u,v)\|}{\|(u,v)\| \cdot \|(\tilde{u},\tilde{v})\|}, \frac{v-\tilde{v}}{\|(u,v)\|} + \frac{\tilde{v} \| (\tilde{u},\tilde{v})\| - \tilde{v} \| (u,v)\|}{\|(u,v)\| \cdot \|(\tilde{u},\tilde{v})\|} \right) \right\| \\
&\leq \frac{\|(u-\tilde{u}, v - \tilde{v})\|}{\|(u,v)\|} + \frac{\left| \|(\tilde{u},\tilde{v})\| - \|(u,v)\| \right|}{\|(u,v)\|} \leq 2 \frac{\|(u-\tilde{u}, v - \tilde{v})\|}{\|(u,v)\|} \\
&\leq L \| (u - \tilde{u}, v - \tilde{v}) \|,
\end{align*}
where $L := \frac{2}{\rho} > 0$ and $\rho > 0$ is given by Lemma \ref{lem:1.5}.
\end{proof}

Let 
$$
\mathscr{C} := \{ (u,v) \in S \ : \ (\cJ \circ m)'(u) = 0 \}.
$$
Let $\cF \subset \mathscr{C}$ be a symmetric set such that for every orbit $\mathcal{O}(u,v)$ there is unique representative $v \in \cF$. We want to show that $\cF$ is infinite. Assume by contradiction that $\cF$ is finite. Then we have that (see \cite{SzulkinWeth})
$$
\kappa := \inf \{ \|(u-\tilde{u}, v - \tilde{v}) \| \ : \ (u,v),(\tilde{u},\tilde{v}) \in \mathscr{C}, \ (u,v) \neq (\tilde{u}, \tilde{v}) \} > 0.
$$
Hence $\mathscr{C}$ is a discrete set.

\begin{Lem}
Let $d \geq c = \inf_{\cN} \cJ$. If $(w_n^1, z_n^1), (w_n^2, z_n^2) \subset S$ are Palais-Smale sequences for $\cJ \circ m$ such that
$$
(\cJ \circ m)(w_n^i, z_n^i) \leq d, \quad i \in \{1,2\},
$$
then
$$
\|(w_n^1 - w_n^2, z_n^1 - z_n^2)\| \to 0
$$
or
$$
\liminf_{n\to\infty} \|(w_n^1 - w_n^2, z_n^1 - z_n^2)\| \geq \rho(d) > 0,
$$
where the constant $\rho(d) > 0$ depends only on $d$, but not on the particular choice of sequences.
\end{Lem}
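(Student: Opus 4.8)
The plan is to push both Palais–Smale sequences off the sphere $S$ onto the Nehari manifold $\cN$ via $m$, turn them into Palais–Smale sequences for the free functional $\cJ$, reduce the whole assertion to a statement about the subsequential limits of a single real sequence, and finally read off the dichotomy from the fact that the set of such limits is an interval. First I would set $u_n^i:=m(w_n^i,z_n^i)\in\cN$. Since $m$ is a $\cC^1$-diffeomorphism with Lipschitz inverse (Lemma~\ref{lem:lip}) and since $\|u_n^i\|=t_{(w_n^i,z_n^i)}\in[\rho,R(d)]$ — the lower bound from Lemma~\ref{lem:1.5}, the upper bound $R(d)$ from coercivity, using $\cJ(u_n^i)\le d$ — the gradient relation $(\cJ\circ m)'(s)=\cJ'(m(s))\circ m'(s)$ shows that $(u_n^i)$ is a bounded Palais–Smale sequence for $\cJ\big|_\cN$. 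Arguing exactly as in Lemma~\ref{lem:2.6} (the Lagrange multiplier vanishes because $\xi'(u,v)(u,v)<0$ stays bounded away from $0$ on bounded pieces of $\cN$), $(u_n^i)$ becomes a bounded Palais–Smale sequence for the free functional $\cJ$ with $\cJ(u_n^i)\le d$.

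Next I would record the crucial inequality. Writing $h_n:=u_n^1-u_n^2$ with components $h_n^{(1)},h_n^{(2)}$ and testing $\cJ'(u_n^1)-\cJ'(u_n^2)$ against $h_n$, the quadratic and coupling terms are bounded below by $(1-\delta)\|h_n\|^2$ thanks to \eqref{1.2}, the monotone contributions $[|s|^{q-2}s-|t|^{q-2}t](s-t)\ge 0$ may be discarded, and the $f_i$-terms are controlled through (F1)–(F2) in the form $|f_i(s)-f_i(t)|\le(\eps+C_\eps(|s|^{p-2}+|t|^{p-2}))|s-t|$ (as in Lemma~\ref{lem:1.4}). Choosing $\eps$ small and using the uniform $H^1$-bound together with Hölder's inequality, this produces a constant $C=C(d)$ with
\begin{equation*}
\tfrac{1-\delta}{2}\|h_n\|^2\le C\bigl(|h_n^{(1)}|_p^2+|h_n^{(2)}|_p^2\bigr)+o(1),
\end{equation*}
where $o(1)$ absorbs $[\cJ'(u_n^1)-\cJ'(u_n^2)](h_n)\to 0$. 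In particular, if $|h_n^{(1)}|_p+|h_n^{(2)}|_p\to 0$ then $\|h_n\|\to 0$.

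The heart of the argument is the following subclaim: \emph{every subsequential limit of $a_n:=\|(w_n^1-w_n^2,z_n^1-z_n^2)\|$ lies in $\{0\}\cup[\rho(d),\infty)$.} To prove it, suppose $a_{n_k}\to\ell$ with $\ell>0$. Since $m^{-1}$ is Lipschitz, $\|h_{n_k}\|\ge a_{n_k}/L$ is bounded below, so by the displayed inequality $|h_{n_k}^{(1)}|_p+|h_{n_k}^{(2)}|_p\not\to 0$. Lemma~\ref{lem:lions} (contrapositive) then yields $\xi_k\in\mathbb{Z}^N$ with $\int_{B(\xi_k,1+\sqrt N)}\bigl(|h_{n_k}^{(1)}|^2+|h_{n_k}^{(2)}|^2\bigr)\ge\gamma>0$; translating by $\xi_k$ (which preserves $\cJ$, $\|\cdot\|$ and the Palais–Smale property by (V3)), passing to weak limits $u_{n_k}^i(\cdot+\xi_k)\weakto\bar u^i$, and using the weak continuity of $\cJ'$ from Section~\ref{sect:5}, I obtain critical points $\bar u^1,\bar u^2$ of $\cJ$ with $\bar u^1-\bar u^2\neq(0,0)$. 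If one of them vanishes, the other lies in $\cN$ and has norm $\ge\rho$ by Lemma~\ref{lem:1.5}; if both are nonzero they lie in $\cN$, so $m^{-1}(\bar u^1)\neq m^{-1}(\bar u^2)$ are distinct points of $\mathscr{C}$, whence $\|\bar u^1-\bar u^2\|\ge\kappa/L$ by discreteness of $\mathscr{C}$ and Lemma~\ref{lem:lip}. Either way $\|\bar u^1-\bar u^2\|\ge\rho_0:=\min\{\rho,\kappa/L\}$, and weak lower semicontinuity gives $\liminf_k\|h_{n_k}\|\ge\rho_0$; transferring back to $S$ via $a_n\ge\|h_n\|/L'(d)$ yields $\ell\ge\rho_0/L'(d)=:\rho(d)>0$.

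Finally I would close the argument. The set of subsequential limits of the bounded sequence $(a_n)$ is the closed interval $[\liminf a_n,\limsup a_n]$, and by the subclaim it is contained in $\{0\}\cup[\rho(d),\infty)$; a connected subset of this set is either $\{0\}$, in which case $a_n\to 0$, or a subinterval of $[\rho(d),\infty)$, in which case $\liminf a_n\ge\rho(d)$. This is precisely the asserted dichotomy, with $\rho(d)$ depending only on $d$ through $R(d)$ and $L'(d)$ and the fixed constants $\rho,\kappa,L$. The main obstacle is the quantitative transfer between $S$ and $\cN$: one must know that $m$ itself (not merely $m^{-1}$) is Lipschitz on $\cJ\circ m$-sublevel sets, a companion of Lemma~\ref{lem:lip} resting on the uniform bounds $t_{(w,z)}\in[\rho,R(d)]$ and the $\cC^1$-dependence of $t_{(w,z)}$; once this is granted, it is the ``interval of subsequential limits'' step that upgrades the pointwise gap $\rho_0$ to the uniform $\liminf$ bound for the entire sequence.
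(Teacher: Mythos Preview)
Your approach mirrors the paper's Szulkin--Weth argument closely: push to $\cN$ via $m$, obtain bounded Palais--Smale sequences for the free functional, and split according to whether $|h_n^{(1)}|_p+|h_n^{(2)}|_p\to 0$ (giving $\|h_n\|\to 0$ and hence $a_n\to 0$ by Lemma~\ref{lem:lip}) or not (Lions plus translations plus discreteness of $\mathscr{C}$). Your subsequential-limit-interval wrapper is a clean way to make the dichotomy rigorous and is arguably tighter than the paper's presentation of the two cases.

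The one place where you diverge substantively---and where you yourself flag ``the main obstacle''---is the transfer back from $\liminf_k\|h_{n_k}\|\ge\rho_0$ to $\ell\ge\rho(d)$, which you propose to do via a Lipschitz bound on $m$ on sublevel sets. The paper avoids this step entirely. Instead of bounding $\|h_n\|$ below and then pulling back through $m$, the paper works directly on the sphere: since $m^{-1}(u,v)=(u,v)/\|(u,v)\|$ explicitly, one writes
\[
a_n=\Bigl\|\frac{(u_n^1,v_n^1)}{\|(u_n^1,v_n^1)\|}-\frac{(u_n^2,v_n^2)}{\|(u_n^2,v_n^2)\|}\Bigr\|,
\]
passes to a subsequence so that $\|(u_n^i,v_n^i)\|\to\alpha^i\in[\beta,\nu(d)]$ (here $\beta=\inf_\cN\|\cdot\|$ and $\nu(d)=\sup\{\|(u,v)\|:(u,v)\in\cN,\ \cJ(u,v)\le d\}$), and applies weak lower semicontinuity of the norm directly to $a_n$ to obtain
\[
\liminf_n a_n\ge\Bigl\|\frac{(u^1,v^1)}{\alpha^1}-\frac{(u^2,v^2)}{\alpha^2}\Bigr\|.
\]
Writing $\beta_i=\|(u^i,v^i)\|/\alpha^i\ge\beta/\nu(d)$, this yields $\liminf a_n\ge(\beta/\nu(d))\,\kappa$ when both weak limits are nonzero, and $\liminf a_n\ge\beta/\nu(d)$ when one of them vanishes. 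No Lipschitz property of $m$ is invoked, and one gets the explicit constant $\rho(d)=\min\{\kappa,1\}\,\beta/\nu(d)$.

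So your argument is correct in outline, but the step you single out as the main obstacle is unnecessary: replace the detour through $\|h_n\|$ by the paper's direct weak-limit computation on the normalized sequences, and the gap closes without any additional regularity of $m$.
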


\begin{proof}
Define $(u_n^i, v_n^i) := m ( w_n^i, z_n^i )$ for $i \in \{1,2\}$. Then a similar reasoning to \cite[Corollary 2.10]{SzulkinWeth} shows that $(u_n^i, v_n^i)$ are Palais-Smale sequences for $\cJ$ and 
$$
\cJ (u_n^i, v_n^i) \leq d, \quad i \in \{1,2\}.
$$
While $\cJ$ is coercive on $\cN$, the sequences are bounded and in view of the Sobolev embedding, they are bounded also in $L^2 (\R^N) \times L^2 (\R^N)$, say 
$$
|u_n^1|_2 + |v_n^1|_2 + |u_n^2|_2 + |v_n^2|_2 \leq M.
$$
We will consider two cases. \\
\textbf{Case 1.} $|u_n^1 - u_n^2|_p + |v_n^1 - v_n^2|_p \to 0$. \\
Fix any $\varepsilon > 0$ and note that
\begin{align*}
&\quad \left\| (u_n^1 - u_n^2, v_n^1 - v_n^2) \right\|^2 = \cJ'(u_n^1, v_n^1) (u_n^1-u_n^2, v_n^1-v_n^2) - \cJ'(u_n^2, v_n^2) (u_n^1-u_n^2, v_n^1-v_n^2) \\
&\quad + \int_{\R^N} (f_1 (u_n^1) - f_1 (u_n^2) ) (u_n^1 - u_n^2) \, dx + \int_{\R^N} (f_2 (v_n^1) - f_2 (v_n^2) ) (v_n^1 - v_n^2) \, dx \\
&\quad - \int_{\R^N} \left( |u_n^1 |^{q-2} u_n^1 - |u_n^2|^{q-2} u_n^2 \right) (u_n^1 - u_n^2) \, dx - \int_{\R^N} \left( |v_n^1 |^{q-2} v_n^1 - |v_n^2|^{q-2} v_n^2 \right) (v_n^1 - v_n^2) \, dx \\
&\quad + 2 \int_{\R^N} \lambda(x) (u_n^1 - u_n^2)(v_n^1 - v_n^2) \, dx \\
&\leq \varepsilon \| (u_n^1 - u_n^2, v_n^1 - v_n^2) \| \\
&\quad + \varepsilon \int_{\R^N} (|u_n^1| + |u_n^2|) |u_n^1 - u_n^2| \, dx + C_\varepsilon \int_{\R^N} (|u_n^1|^{p-1} + |u_n^2|^{p-2}) |u_n^1 - u_n^2| \, dx \\
&\quad + \varepsilon \int_{\R^N} (|v_n^1| + |v_n^2|) |v_n^1 - v_n^2| \, dx + C_\varepsilon \int_{\R^N} (|v_n^1|^{p-1} + |v_n^2|^{p-2}) |v_n^1 - v_n^2| \, dx \\
&\quad - \int_{\R^N} \left( |u_n^1 |^{q-2} u_n^1 - |u_n^2|^{q-2} u_n^2 \right) (u_n^1 - u_n^2) \, dx - \int_{\R^N} \left( |v_n^1 |^{q-2} v_n^1 - |v_n^2|^{q-2} v_n^2 \right) (v_n^1 - v_n^2) \, dx \\
&\quad + 2 \int_{\R^N} \lambda(x) (u_n^1 - u_n^2)(v_n^1 - v_n^2) \, dx \\
&\leq (1 + C_0) \varepsilon \| (u_n^1 - u_n^2, v_n^1 - v_n^2) \| + D_\varepsilon \left( |u_n^1 - u_n^2|_p + |v_n^1 - v_n^2|_p \right) \\
&\quad + C_1 \left( |u_n^1 - u_n^2|_q + |v_n^1 - v_n^2|_q \right) + 2 \int_{\R^N} \lambda(x) (u_n^1 - u_n^2)(v_n^1 - v_n^2) \, dx
\end{align*}
for $C_0, C_1, D_\varepsilon > 0$. From our assumption we have
$$
|u_n^1 - u_n^2|_p + |v_n^1 - v_n^2|_p \to 0.
$$
Since $(u_n^1 - u_n^2)$ and $(v_n^1 - v_n^2)$ are bounded in $L^2(\R^N)$ and $2 <q <p$, it follows from the interpolation inequality that there holds
$$
|u_n^1 - u_n^2|_q + |v_n^1 - v_n^2|_q \to 0.
$$
Taking \eqref{1.2} into account we get
$$
\| (u_n^1 - u_n^2, v_n^1 - v_n^2) \|^2 \leq \varepsilon \frac{(1+C_0)}{1-\delta} \| (u_n^1 - u_n^2, v_n^1 - v_n^2) \| + o(1)
$$
for all $\varepsilon > 0$. Hence
$$
\limsup_{n\to\infty} \| (u_n^1 - u_n^2, v_n^1 - v_n^2) \|^2 \leq \varepsilon \frac{(1+C_0)}{1-\delta} \limsup_{n\to\infty}\| (u_n^1 - u_n^2, v_n^1 - v_n^2) \|
$$
and therefore $\| (u_n^1 - u_n^2, v_n^1 - v_n^2) \| \to 0$. From Lemma \ref{lem:lip} we obtain
$$
\|(w_n^1 - w_n^2, z_n^1 - z_n^2)\| = \| m^{-1} (u_n^1, v_n^1) - m^{-1} (u_n^2, v_n^2) \| \leq L \| (u_n^1 - u_n^2, v_n^1 - v_n^2) \| \to 0.
$$
\textbf{Case 2.} $|u_n^1 - u_n^2|_p + |v_n^1 - v_n^2|_p \not\to 0$. \\
In view of Lions lemma (see Lemma \ref{lem:lions}) there is a sequence $(y_n) \subset \R^N$ such that
$$
\int_{B(y_n,1)} |u_n^1 - u_n^2|^2 \, dx + \int_{B(y_n,1)} |v_n^1 - v_n^2|^2 \, dx \geq \varepsilon
$$
for some $\varepsilon > 0$. In view of $\tau_k$-invariance of $\cN$, $\cJ$, $\cJ \circ m$ and $\tau_k$-equivariance of $\nabla \cJ$, $\nabla (\cJ \circ m)$, $m$ and $m^{-1}$ we can assume that the sequence $(y_n) \subset \R^N$ is bounded. We have that, up to a subsequence
$$
(u_n^i, v_n^i) \weakto (u^i, v^i) \quad \mathrm{in} \ E,  \ i \in \{1,2\} 
$$
and $(u_n^1, v_n^1) \neq (u_n^2, v_n^2)$. Moreover $\cJ'(u^1,v^1) = \cJ'(u^2,v^2) = 0$ and
$$
\| (u_n^i, v_n^i) \| \to \alpha^i, \quad i \in \{1,2\}.
$$
We see that $\alpha^i$ satisfies
$$
0 < \beta := \inf_{(u,v) \in \cN} \| (u,v) \| \leq \alpha^i \leq \nu(d) := \sup \{ \| (u,v) \| \ : \ (u,v) \in \cN, \ \cJ(u,v) \leq d \}.
$$
Suppose that $(u^1, v^1) \neq (0,0)$ and $(u^2, v^2) \neq (0,0)$. Then $(u^i, v^i) \in \cN$, $(w^i, z^i) := m^{-1} (u^i, v^i) \in S$ and $(w^1, z^1) \neq (w^2, z^2)$. Then
\begin{align*}
\liminf_{n\to\infty} \| (w_n^1-w_n^2, z_n^1-z_n^2) \| &= \liminf_{n\to\infty} \left\| \frac{(u_n^1,v_n^1)}{\|(u_n^1,v_n^1)\|} - \frac{(u_n^2,v_n^2)}{\|(u_n^2,v_n^2)\|} \right\| \geq \left\| \frac{(u^1,v^1)}{\alpha^1} - \frac{(u^2,v^2)}{\alpha^2} \right\| \\ &= \| \beta_1 (w^1, z^1) - \beta_2 (w^2, z^2) \|,
\end{align*}
where $ \beta_i = \frac{\|(u^i,v^i)\|}{\alpha^i} \geq \frac{\beta}{\nu (d)}$, $i\in \{1,2\}$. Moreover
$$
\| (w_n^1, z_n^1) \| = \| (w_n^2, z_n^2) \| = 1.
$$
Hence
$$
\liminf_{n\to\infty} \| (w_n^1-w_n^2, z_n^1-z_n^2) \| \geq \| \beta_1 (w^1, z^1) - \beta_2 (w^2, z^2) \| \geq \min_{i \in \{1,2\}} \{ \beta^i \}  \| (w^1, z^1) - (w^2, z^2) \| \geq \frac{\beta \kappa}{\nu (d)}.
$$
If $(u^2, v^2) = (0,0)$ we have $(u^1, v^1) \neq (u^2, v^2) = (0,0)$ and similarly
$$
\liminf_{n\to\infty} \| (w_n^1-w_n^2, z_n^1-z_n^2) \| = \liminf_{n\to\infty} \left\| \frac{(u_n^1,v_n^1)}{\|(u_n^1,v_n^1)\|} - \frac{(u_n^2,v_n^2)}{\|(u_n^2,v_n^2)\|} \right\| \geq \left\| \frac{(u^1,v^1)}{\alpha^1} \right\| \geq \frac{\beta}{\nu(d)}.
$$
\end{proof}

In view of \cite[Lemma II.3.9]{Struwe} $\cJ \circ m \rightarrow \R$ admits a pseudo-gradient vector field, i.e. there is a Lipschitz continuous function $\cH : S \setminus \mathscr{C} \rightarrow T S$ such that
\begin{align*}
\cH (w) &\in T_w S, \\
\| \cH (w) \| &< 2 \| \nabla (\cJ \circ m)(w) \|, \\
\langle \mathcal{H}(w), \nabla  (\cJ \circ m)(w) \rangle &> \frac{1}{2} \| \nabla (\cJ \circ m)(w) \|^2
\end{align*}
for $w \in S \setminus \mathscr{C}$. Then we can define the flow $\eta : \cT \rightarrow S \setminus \mathscr{C}$ by
$$
\left\{ \begin{array}{l}
\frac{d \eta}{dt}(t,w) = - \mathcal{H}(\eta(t,w)), \\
\eta (0,w) = w ,
\end{array} \right.
$$
where $\cT := \{ (t,w) \ : \ w \in S \setminus \mathscr{C}, \ T^{-}(w) < t < T^+ (w) \}$. $T^-(w)$ and $T^+(w)$ are the maximal existence time in negative and positive direction of $t \mapsto \eta (t,w)$. Then we can repeat the arguments from the proof of \cite[Theorem 1.2]{SzulkinWeth} and \cite[Theorem 1.2]{Bieganowski}. In fact we show that for any $k \geq 1$ there exists $(w_k,z_k) \in S$ such that
$$
(\cJ \circ m)' (w_k, z_k) = 0 \quad \mbox{and} \quad \cJ ( m (w_k, z_k) ) = c_k,
$$
where
$$
c_k := \inf \left\{ d \in \R \ : \ \gamma \left( \left\{ \left( w,z \right) \in S \ : \ \cJ \left( m \left( w,z \right) \right) \leq d \right\} \right) \geq k \right\}
$$
and $\gamma$ denotes the Krasnoselskii genus for closed and symmetric sets. We refer to \cite{Struwe} for basic facts about the Krasnoselskii genus and Lusternik-Schnirelmann values. Moreover $c_k < c_{k+1}$ and we have a contradiction with the assumption that $\mathcal{F}$ is finite.
\newline
\newline
\textbf{Acknowledgements.} The author was partially supported by the National Science Centre, Poland (Grant No. 2017/25/N/ST1/00531) and he would like to thank the referee for many valuable comments helping to improve the paper.


\end{document}